\documentclass{ecai}  

\usepackage{graphicx}
\usepackage{latexsym}

\usepackage{amsfonts}
\usepackage{booktabs}
\usepackage{multirow}
\usepackage{amsmath}
\usepackage{amssymb}
\usepackage{amsthm}
\usepackage{subcaption}
\usepackage{bm}
\usepackage{url}

%


\newtheorem{theorem}{Theorem}
\newtheorem{lemma}{Lemma}
\newtheorem{corollary}{Corollary}

\newtheorem{example}{Example}[theorem]

\newcommand{\dd}{\mathop{}\!{d}} 
\newcommand{\p}{\mathop{}\!{\partial}}
\newcommand{\T}{^\mathrm{T}} 
\newcommand{\xstar}{\bm{x}_{1:k}}
\newcommand{\x}{\bm{x}_{k+1}}
\newcommand{\y}{\bm{y}_{k+1}}
\newcommand{\eqdef}{\stackrel{\mathrm{def}}{=}}

\begin{document}

\begin{frontmatter}

\title{A Bayesian Optimization Framework for Finding Local Optima in Expensive Multimodal Functions}


\author[A]{\fnms{Yongsheng}~\snm{Mei}\thanks{Corresponding Author. Email: ysmei@gwu.edu.}}
\author[A]{\fnms{Tian}~\snm{Lan}}
\author[B]{\fnms{Mahdi}~\snm{Imani}}
\author[A]{\fnms{Suresh}~\snm{Subramaniam}}

\address[A]{The George Washington University}
\address[B]{Northeastern University}

\begin{abstract}
Bayesian optimization (BO) is a popular global optimization scheme for sample-efficient optimization in domains with expensive function evaluations. The existing BO techniques are capable of finding a single global optimum solution. However, finding a set of global and local optimum solutions is crucial in a wide range of real-world problems, as implementing some of the optimal solutions might not be feasible due to various practical restrictions (e.g., resource limitation, physical constraints, etc.). In such domains, if multiple solutions are known, the implementation can be quickly switched to another solution, and the best possible system performance can still be obtained. This paper develops a multimodal BO framework to effectively find a set of local/global solutions for expensive-to-evaluate multimodal objective functions. We consider the standard BO setting with Gaussian process regression representing the objective function. We analytically derive the joint distribution of the objective function and its first-order derivatives. This joint distribution is used in the body of the BO acquisition functions to search for local optima during the optimization process. We introduce variants of the well-known BO acquisition functions to the multimodal setting and demonstrate the performance of the proposed framework in locating a set of local optimum solutions using multiple optimization problems.
\end{abstract}

\end{frontmatter}

\section{Introduction}
\label{sec:introduction}

Bayesian optimization (BO) is a popular global optimization scheme for sample-efficient optimization in domains with expensive function evaluations~\cite{shahriari2015taking,wu2017bayesian}. The BO iteratively builds a statistical model of the objective function according to all the past evaluations and sequentially selects the next evaluation by maximizing an acquisition function. BO has shown tremendous success in a wide range of domains with no analytical formulation of objective functions, including simulation optimizations~\cite{acerbi2017practical}, device tuning/calibration~\cite{dalibard2017boat,vargas2020bayesian}, material/drug design~\cite{zhang2020bayesian,griffiths2020constrained}, and many more. 

Despite several variants of BO in recent years~\cite{balandat2020botorch,eriksson2019scalable,wu2019hyperparameter}, the focus of all methods has been on finding a single global optimum solution. However, many real-world problems can be considered multimodal optimization~\cite{das2011real}, where it is desired to find all or most global and local optimum solutions of multimodal functions. The rationale is that implementing some of the optimal solutions might not be feasible due to various practical restrictions (e.g., resource limitation, physical constraints, etc.); in such a scenario, if multiple solutions are known, the implementation can be quickly switched to another solution and the best possible system performance can still be obtained.

Besides BO techniques, several deterministic and stochastic techniques have been developed for multi-model optimization. These include the gradient descent method, the quasi-Newton method~\cite{byrd2016stochastic,lewis2013nonsmooth}, and the Nelder-Mead's simplex method~\cite{butt2017globalized}, which require the analytical form of the multimodal function and tend to be trapped into a local optimum. Evolutionary optimization is a class of techniques applicable to domains with no available analytical representations. Examples include variants of genetic algorithms~\cite{li2015truss,liang2011genetic}, clonal selection algorithms~\cite{de2000clonal}, and artificial immune networks~\cite{de2002artificial}. However, evolutionary techniques' reliance on heuristics and excessive function evaluations prevent their reliable applications in domains with expensive-to-evaluate functions.

This paper develops a multimodal BO framework to effectively find a set of local/global solutions for multimodal objective functions. We consider the standard Gaussian process (GP) regression as the surrogate model for representing the objective function. Since characterizing local/global optima requires accessing both the values and first-order conditions of the objective function, we analytically derive the joint distribution of the objective function and its first-order gradients using the kernel function properties and derivatives, illustrated by Fig.~\ref{fig:framework}. This joint distribution is used in the body of the BO acquisition functions to search for local optima during the optimization process. We introduce the variants of the well-known BO acquisition functions to the multimodal setting, including joint expected improvement and probability of improvement. The performance of the proposed framework in locating a set of local optimum solutions is demonstrated using multiple optimization problems, including evaluations using synthetic function, well-known multimodal benchmarks such as Griewank function and Shubert function, as well as through hyperparameter tuning problems for image classification on CIFAR-10 dataset~\cite{krizhevsky2009learning}. Our proposed solution can effectively capture multiple local/global optima in these evaluations. 

The main contributions of our work are as follows:
\begin{itemize}
	\item We develop a new BO framework for finding local optima by analytically deriving the joint distribution of the objective function and its first-order gradients according to the kernel function of Gaussian process regression. 
	\item We introduce new acquisition functions, such as joint expected improvement and probability of improvement, to search local optima during the optimization process of our framework.
	\item Experimental results on multimodal functions and real-world image classification problem demonstrates the effectiveness of our framework in capturing multiple local and global optima.
\end{itemize}

\section{Related Work}

\subsection{Bayesian Optimization and Acquisition Functions}

Among many optimization frameworks~\cite{wang2021network,rafiee2021intermittent,wu2023serverless}, Bayesian optimization has emerged as a popular method for the sample-efficient optimization of expensive objective functions~\cite{snoek2012practical,frazier2018tutorial}. It is capable of optimizing objective functions with no available closed-form expression, where only point-based (and possibly noisy) evaluation of the function is possible. The sampling-efficiency of BO schemes has made them suitable for domains with non-convex and costly function evaluations. Fundamentally, BO is a sequential model-based approach to solving the problem of finding a global optimum of an unknown objective function $f(\cdot) $: $\boldsymbol{x}^* = \arg\max_{x \in \mathcal{X}} f(\boldsymbol{x})$, where $ {\mathcal X} \subset {\mathbb R}^d $ is a compact set. It performs a sequential search, and at each iteration $k$, selects a new location $x_{k+1}$ to evaluate $f$ and observe its value, until making a final recommendation of the best estimate of the optimum $\boldsymbol{x}^*$.

The sequential selection is achieved through the acquisition function $a:\mathcal{X} \rightarrow \mathbb{R}$, defined over the posterior of GP model, where BO selects a sample in the search space with the highest acquisition value. Upon evaluating the objective function at the selected input, the surrogate GP model gets updated. Various BO policies have been developed depending on the acquisition function choices, including expected improvement (EI)~\cite{movckus1975bayesian}, knowledge gradient (KG)~\cite{frazier2009knowledge}, probability of improvement (PI)~\cite{kushner1964new}, upper-confidence bounds (UCB)~\cite{lai1985asymptotically}, and entropy search (ES)~\cite{hernandez2014predictive}. Recent studies have also considered accounting for future improvements in solution quality~\cite{zhang2021two,jiang2020efficient,wu2019practical} and BO methods for constrained problems~\cite{ariafar2019admmbo,perrone2019constrained}. However, existing work often focuses on finding the global optimum rather than a set of local/global optima, which are also necessary for multimodal objective functions and will be explored in this paper.

\subsection{Local Maxima in Gaussian Random Fields}

A separate line of statistical applications concentrates on the tail distribution of the heights of local maxima in non-stationary Gaussian random fields, such as in peak detection problems. Such a tail distribution is defined as the probability that the height of the local maximum surpasses a given threshold at the point $ \bm{x} $, conditioned on the case that the point is a local maximum of the Gaussian process model, given by the following equation: $ \Pr(f(\bm{x}>\xi)|f(\bm{x}) \text{ is one local maximum}) $, where $ \xi $ is the threshold. Existing work~\cite{cheng2015distribution} has probed into this problem in the Gaussian process model. The general formulae are derived in~\cite{cheng2015distribution} for non-stationary Gaussian fields and a subset of Euclidean space or Riemannian manifold of arbitrary dimension~\cite{lee2018introduction}, which depends on local properties of the Gaussian process model rather than the global supremum of the field. Although the goal is to characterize certain local properties rather than finding a set of local/global optima, the contributions of the above work provide the key intuitions to our new BO framework design.

\subsection{First-Order Derivative in Bayesian Optimization}

The first-order derivative information has been exploited in existing works of BO, such as~\cite{lizotte2008practical,vzilinskas2006practical}. Besides, \cite{maclaurin2015gradient} make gradient information available for hyperparameter tuning. Additionally, adjoint methods provide gradients cheaply in the optimization of engineering systems~\cite{jameson1999re,plessix2006review}. The gradients provide useful information about the objective function and can help the BO during the optimization process. For instance, \cite{wu2017bayesian} develop a derivative-enabled knowledge-gradient algorithm by incorporating derivative information into GP for BO, given by $ (f(\x),\nabla f(\x))\T|f(\xstar),\nabla f(\xstar) \sim \mathcal{N}\bigl((f(\xstar),\nabla f(\xstar))\T, \mathrm{diag}(\sigma^2)\bigr) $, where $ \sigma^2 $ is the variance. These methods assume the gradients of the objective function can be queried along with the objective function during the optimization process. GIBO\cite{muller2021local} alternates between minimizing the variance of the estimate of the gradient and moving in the direction of the expected gradient. Later proposed MPD \cite{nguyen2022local} extended and refined it by finding the maximum look-ahead gradient descent direction. However, we propose the BO framework to find local optima by computing and updating the joint distribution of the prediction with its first-order derivative regarding kernel functions. Besides, existing methods with gradients concentrate on using gradients as additional information to improve the traditional BO model targeting global optimum, while our algorithm aims to find as many local optima as possible. Despite the similarity, we do not directly access the objective's gradient.

\section{Background}

{\em Gaussian model}: Gaussian process (GP) model is the most commonly used model for standard BO and also adopted by our framework, providing the posterior distribution of the objective function according to all the past evaluations. In this part, we introduce several preliminaries of the GP model. Considering the objective function $ f(\cdot) $ and the GP model with $ k + 1 $ input samples $ \bm{x} $ of dimension $ n $, the prior of the model is:
\begin{equation*}
	f(\bm{x}_{1:k+1}) \sim \mathcal{N}(\mu_{\bm{x}_{1:k+1}},\Sigma_{\bm{x}_{1:k+1},\bm{x}_{1:k+1}}),
\end{equation*}
where we use $ \mu_{\bm{x}_{1:k+1}} $ to denote the mean of the prior and $ \Sigma_{\bm{x}_{1:k+1},\bm{x}_{1:k+1}} $ to represent the initial covariance of $ k+1 $ input samples. If we know the first $ k $ samples' values as observations $ f(\xstar) $, based on the prior, the posterior of the GP model representing the objective function at the next sampling point $ \x $ can be obtained as:
\begin{equation}
	p_f \eqdef f(\x)|f(\xstar) \sim \mathcal{N}(\mu_{\x},\Sigma_{\x,\x}),
	\label{eq:f_x}
\end{equation}
where $ \mu_{\x} $ and $ \Sigma_{\x,\x} $ are the mean and variance respectively at this step. For simplicity purpose, we use $ p_f $ as the short notation of the posterior $ f(\x)|f(\xstar) $.

\section{Finding Local Optima via Bayesian Optimization}
\label{sec:methodology}

\begin{figure}[t]
	\centering
	\begin{subfigure}[t]{0.40\textwidth}
		\centering
		\includegraphics[width=\textwidth]{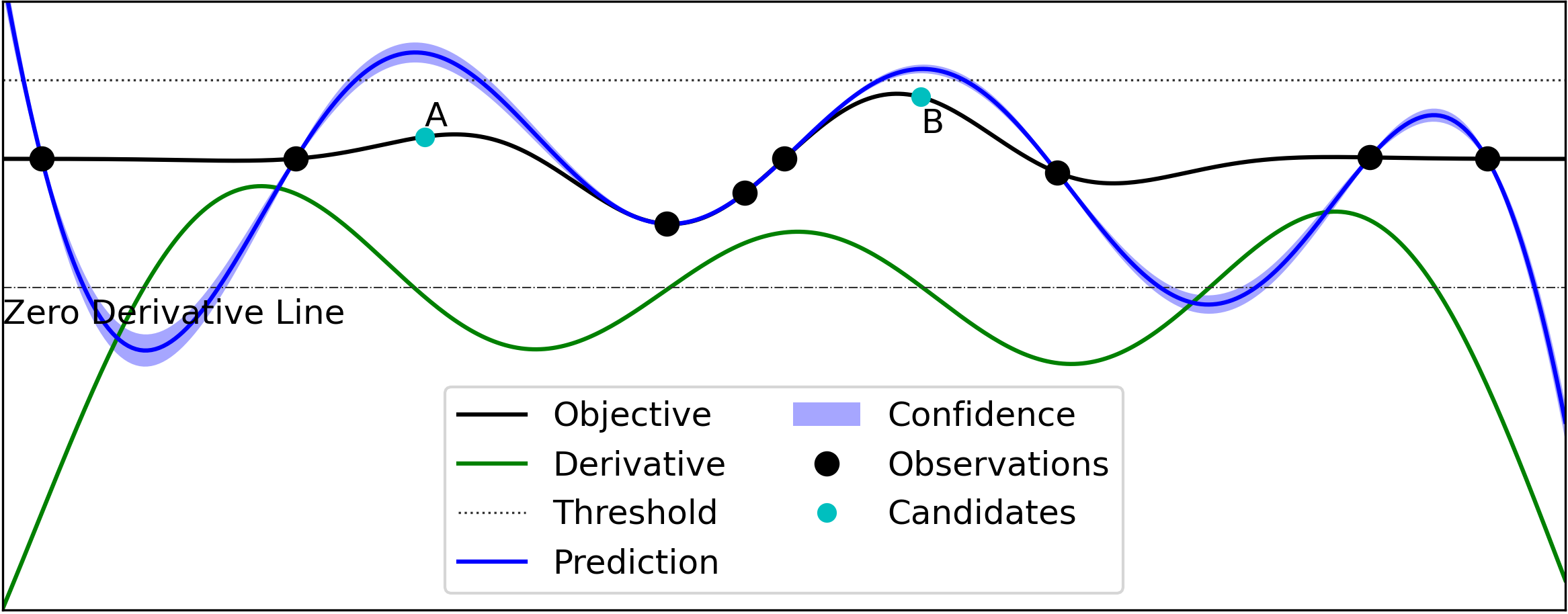}
		\caption{\small Iteration $ k$}
		\label{fig:framework_k}
	\end{subfigure}
	\begin{subfigure}[t]{0.40\textwidth}
		\centering
		\includegraphics[width=\textwidth]{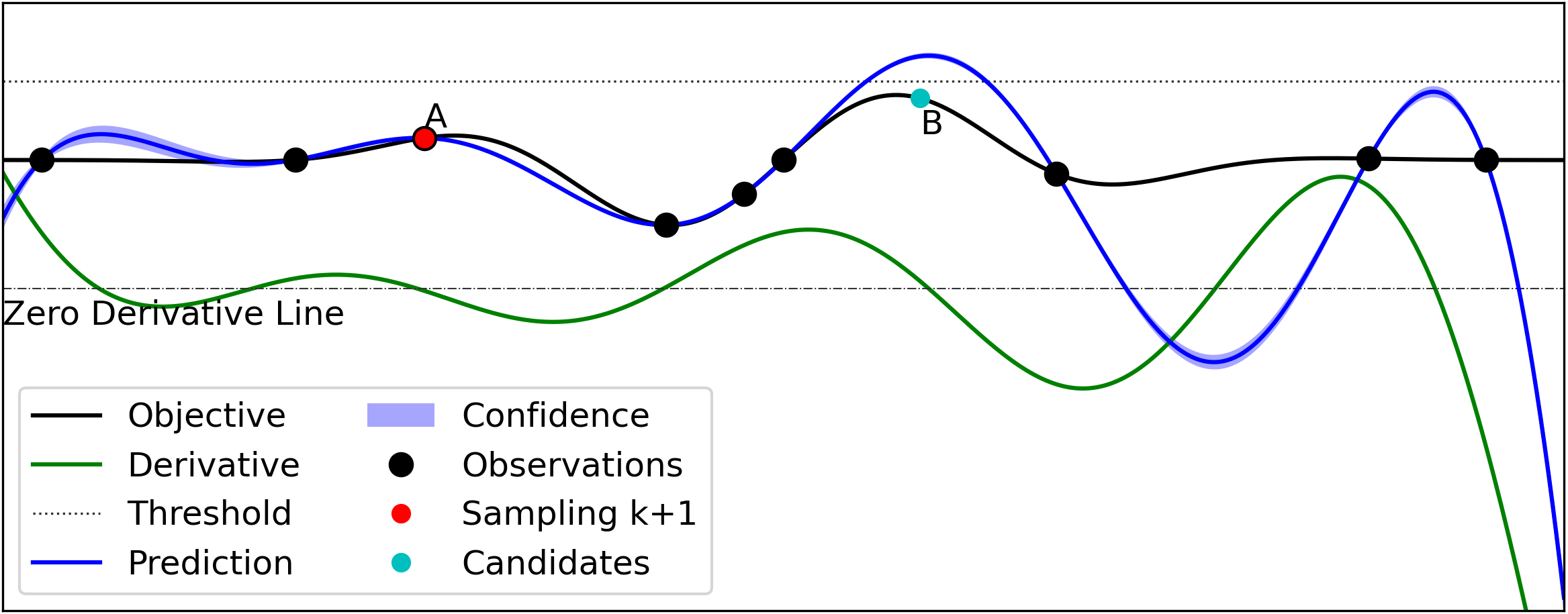}
		\caption{\small Iteration $ k+1 $}
		\label{fig:framework_k+1}
	\end{subfigure}
	\caption{Illustration of our proposed BO method for finding a set of local/global optima by considering both GP model prediction and its first-order derivatives.}
	\label{fig:framework}
\end{figure}

We aim to develop a multimodal BO framework capable of efficiently computing optimal solutions. Our method will achieve optimization regarding a joint distribution containing the Gaussian posterior and its first-order derivative: the Gaussian posterior reflects the surrogate model as we observe the objectives in new places, and the first-order derivative of the posterior informs us about the latent location of local/global optima. Therefore, to this end, we compute the first-order derivative of the objective function to get the gradient and apply it to our new BO framework. Since the derivative operation is linear, we can combine the Gaussian posterior of the objective and its first-order derivative into a joint distribution and then compute the joint mean/variance for the acquisition functions. In this framework, we design new acquisition functions to fulfill the needs of finding local optima. Inspired by existing work~\cite{cheng2015distribution}, we first introduce a particular threshold for the objective posterior to find candidates above/below the threshold value (depending on finding maxima/minima). Besides, as the optimum point's gradient is usually zero, we limit the posterior of the calculated derivative to a small interval around zero to further approach the optimum.

{\em An illustrative example.} Fig.~\ref{fig:framework} shows an illustrative example of our proposed method. In iteration $k$, we update the Gaussian posterior of the objective and its first-order derivative. Existing BO methods for finding a global maximum would select $\bm{x}_{k+1}$ (and subsequent locations) to maximize the expected improvement or the probability of improvement, failing to recognize other local/global maxima in this multimodal function. For instance, probability of improvement can be described by the equation: $ \Pr(f(\bm{x})>\xi) $, where $ \xi $ is the given threshold. 

To find other local/global optima, we derive the prediction's first-order derivative and let it satisfies: $ \Pr(f'(\bm{x})<\epsilon) $, where $ \epsilon $ denotes a small constant and hence indicates the place where the first-order condition approximates to zero. 

On the other hand, if we only consider the first-order condition in choosing $\bm{x}_{k+1}$, the sequential search could easily get trapped in stationary/saddle points without improving the objective value toward local maxima. Therefore, our method leverages the joint distribution of objective function and its first-order derivative to find a set of local/global maxima by jointly computing and updating the probability: $ \Pr(f(\bm{x})>\xi, f'(\bm{x})<\epsilon) $.

As shown in Fig.~\ref{fig:framework_k}, we have two latent local/global maxima, A and B, denoted by the cyan dots. Given the threshold $ \xi $ and first-order condition, the proposed algorithm will select local maximum A as the next sampling point rather than global maximum B at iteration $ k $. Although our method can find the local maximum, it is not oblivious to other possible solutions, given by Fig.~\ref{fig:framework_k+1}, which suggests B as the latent sampling point for the next optimum solution. This illustrative example demonstrates our method's capability of finding all latent optimum solutions of multimodal functions by considering both GP model prediction and the first-order condition. Furthermore, analyzing such joint distribution would also be useful for deriving other special acquisition functions such as the expected improvement.

\subsection{Multimodal Bayesian Optimization}

We characterize a local/global optimum solution by leveraging the estimates of both the objective function and the first-order derivative of the function in the BO framework. We begin by considering the first-order derivative of the objective function $ f(\cdot) $. Due to the linear property of the differentiation operation, we have the derivative of $ f(\cdot) $ also subject to the GP model~\cite{rasmussen2006gaussian}. Then, given input sample $ \y $ that can be a different point from $ \x $, we derive another posterior:
\begin{equation}
	p_{f'} \eqdef f'(\y)|f(\xstar) \sim \mathcal{N}(\mu_{\y},\Sigma_{\y,\y}),
	\label{eq:fprime_y}
\end{equation}
where, for simplicity, we let $ p_f' $ be short for the posterior $ f'(\y)|f(\xstar) $.

Since both $ \x $ and $ \y $ are related to the objective function $ f(\cdot) $, posteriors $ f(\x)|f(\xstar) $ and $ f'(\y)|f(\xstar) $ will not be independent and identically distributed (i.i.d.). Besides, based on~\eqref{eq:f_x} and~\eqref{eq:fprime_y}, and as both posteriors comply with the Gaussian distribution, we can further acquire a joint Gaussian distribution as follows:
\begin{equation}
	p_{f,f'} \sim \mathcal{N}\left(
	\begin{bmatrix}
		\mu_{\x} \\
		\mu_{\y}
	\end{bmatrix},
	\begin{bmatrix}
		\Sigma_{\x,\x} & \Sigma_{\x,\y} \\
		\Sigma_{\y,\x} & \Sigma_{\y,\y}
	\end{bmatrix}\right).
	\label{eq:joint_posterior}
\end{equation}

Before solving the mean and variance in joint distribution~\eqref{eq:joint_posterior}, we introduce the sufficient condition leveraging dominated convergence theorem in a lemma for interchanging derivative with expectation.

\begin{lemma}[Interchangeable condition]
	Let $ Y \in \mathcal{Y} $ be a random variable. $ g:\mathbb{R}\times\mathcal{Y} \rightarrow \mathbb{R} $ is a function such that $ g(t,Y) $ is integrable for all $ t $ and $ g $ is continuously differentiable with respect to $ t $. Assume that there is a random variable $ Z $ such that $ |\frac{\p}{\p t}g(t,Y)| \le Z $ almost surely for all $ t $ and $ \mathbb{E}(Z)<\infty $. Then, we have:
	\begin{equation*}
		\frac{\p}{\p  t}\mathbb{E}\left[g(t,Y)\right]=\mathbb{E}\left[\frac{\p}{\p t}g(t,Y)\right].
	\end{equation*}
	\label{lem:dct}
\end{lemma}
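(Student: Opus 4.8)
The plan is to prove this as a direct application of the dominated convergence theorem (DCT) applied to difference quotients. First I would write the derivative $\frac{\p}{\p t}\mathbb{E}[g(t,Y)]$ as the limit, as $h \to 0$, of the difference quotient $\frac{1}{h}\bigl(\mathbb{E}[g(t+h,Y)] - \mathbb{E}[g(t,Y)]\bigr)$, which by linearity of expectation equals $\mathbb{E}\bigl[\frac{g(t+h,Y)-g(t,Y)}{h}\bigr]$. The goal is then to push the limit inside the expectation, i.e.\ to justify $\lim_{h\to 0}\mathbb{E}[Q_h(Y)] = \mathbb{E}[\lim_{h\to 0} Q_h(Y)]$ where $Q_h(Y) \eqdef \frac{g(t+h,Y)-g(t,Y)}{h}$.

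Next I would check the two hypotheses of DCT for the family $\{Q_h\}$. Pointwise convergence is immediate: since $g$ is continuously differentiable in $t$, for each fixed $Y$ we have $Q_h(Y) \to \frac{\p}{\p t}g(t,Y)$ as $h \to 0$. For the domination step, fix $t$ and consider $h$ in a bounded neighborhood of $0$; by the mean value theorem applied to $s \mapsto g(s,Y)$ on the interval between $t$ and $t+h$, there is some intermediate point $\xi_{h,Y}$ with $Q_h(Y) = \frac{\p}{\p t}g(\xi_{h,Y},Y)$, hence $|Q_h(Y)| \le \sup_{s}\bigl|\frac{\p}{\p t}g(s,Y)\bigr| \le Z$ almost surely, using exactly the hypothesis that $|\frac{\p}{\p t}g(s,Y)| \le Z$ for all $s$ with $\mathbb{E}(Z) < \infty$. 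Then DCT applies and yields $\frac{\p}{\p t}\mathbb{E}[g(t,Y)] = \mathbb{E}[\frac{\p}{\p t}g(t,Y)]$.

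One technical point I would be careful about is that the derivative is a two-sided limit, so strictly I should argue via sequences: for every sequence $h_n \to 0$ the functions $Q_{h_n}$ are dominated by $Z$ and converge pointwise to $\frac{\p}{\p t}g(t,Y)$, so the sequential form of DCT gives convergence of $\mathbb{E}[Q_{h_n}]$ to $\mathbb{E}[\frac{\p}{\p t}g(t,Y)]$ for every such sequence, which is equivalent to the limit statement. I would also note that the integrability of $g(t,Y)$ for all $t$ is what makes the difference quotient's expectation well defined in the first place, and that $\frac{\p}{\p t}g(t,Y)$ is integrable as a consequence of being dominated by $Z$.

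The main obstacle is essentially bookkeeping rather than depth: producing a uniform (in $h$, over a neighborhood of $0$) integrable bound on the difference quotients so that DCT is legitimately applicable. The mean value theorem argument is the cleanest route to this, and it is exactly where the hypothesis $|\frac{\p}{\p t}g(t,Y)| \le Z$ for all $t$ is used in full strength; without the ``for all $t$'' uniformity one could not bound $Q_h$ at the intermediate point $\xi_{h,Y}$. Everything else is a routine invocation of standard convergence theorems.
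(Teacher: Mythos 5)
Your proof is correct and follows essentially the same route as the paper's: rewrite the derivative as a limit of difference quotients, apply the mean value theorem to bound each quotient by $Z$, and invoke the dominated convergence theorem. Your added remarks on arguing via sequences for the two-sided limit and on the integrability of the dominated limit are minor refinements the paper omits, but the argument is the same.
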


\begin{proof}
See Appendix~\ref{subsec:proof_lem}.
\end{proof}

Next, we derive the mean and variance of the target joint distribution~\eqref{eq:joint_posterior}. Based on standard mean and variance results of Gaussian posterior model, we can obtain our mean of designed algorithm given by the following steps:
\begin{equation}
	\begin{aligned}
		\mu_{k+1} &= \mathbb{E}
		\begin{bmatrix}
			f(\x)|f(\xstar) \\
			f'(\y)|f(\xstar)
		\end{bmatrix} \\
		&\stackrel{(a)}{=}\frac{\dd}{\dd \y} \mathbb{E}
		\begin{bmatrix}
			f(\x)|f(\xstar) \\
			f(\y)|f(\xstar)
		\end{bmatrix} \\
		&\;= 
		\resizebox{.84\linewidth}{!}{$
		\begin{bmatrix}
			k(\x,\xstar) \\
			\frac{\dd}{\dd{\y}} k(\y,\xstar)
		\end{bmatrix} \mathbf{K}^{-1}_{\xstar,\xstar} (f(\xstar) - \mu_0(\xstar)),
		$}
	\end{aligned}
	\label{eq:mean_1}
\end{equation}
where (a) uses the interchangeable condition in Lemma~\ref{lem:dct}, and $ \mu_0 $ denotes the initial mean of the first $ k $ samples. 

Furthermore, as the joint posterior is constructed by objective prediction and its first-order condition, the first covariance matrix $ \mathbf{K}_0 $ is given by:
\begin{equation*}
	\mathbf{K}_0 =
	\begin{bmatrix}
		k(\x,\x) & \frac{\dd}{\dd{\y}}k(\x,\y) \\
		\frac{\dd}{\dd{\y}}k(\y,\x) & \frac{\dd^2}{\dd{\y}\dd{\y}}k(\y,\y)
	\end{bmatrix}
\end{equation*}

Therefore, according to the variance of GP model, the variance of~\eqref{eq:joint_posterior} is:
\begin{equation}
	\begin{aligned}
		&\sigma^2_{k+1} \\
		&= 
		\resizebox{0.95\linewidth}{!}{$
		\mathbf{K}_0- 	
		\begin{bmatrix}
			k(\x,\xstar) \\
			\frac{\dd}{\dd{\y}} k(\y,\xstar)
		\end{bmatrix} \mathbf{K}^{-1}_{\xstar,\xstar} 	
		\begin{bmatrix}
			k(\x,\xstar) \\
			\frac{\dd}{\dd{\y}} k(\y,\xstar)
		\end{bmatrix}\T. $}
	\end{aligned}
\end{equation}

Therefore, we conclude the mean and variance of joint distribution~\eqref{eq:joint_posterior} in a theorem by leveraging Lemma~\ref{lem:dct} as follows:

\begin{theorem}[Bayesian optimization for local optima]
	Under the interchangeable condition, the mean of the joint Gaussian distribution posterior in~\eqref{eq:joint_posterior} is:
	\begin{equation}
		\begin{aligned}
			\mu_{k+1} &\eqdef
			\begin{bmatrix}
				\mu_{\x} \\
				\mu_{\y}
			\end{bmatrix} \\ &\;
			= \mathbf{A} \mathbf{K}^{-1}_{\xstar,\xstar} (f(\xstar) - \mu_0(\xstar)),
		\end{aligned}
		\label{eq:mean_2}
	\end{equation}
	where $ \mu_0 $ denotes the initial mean of the first $ k $ samples, and the auxiliary matrix $ \mathbf{A} $ is provided as follows:
	\begin{equation*}
		\mathbf{A} =
		\begin{bmatrix}
			k(\x,\xstar) \\
			\frac{\dd}{\dd{\y}} k(\y,\xstar)
		\end{bmatrix}.
	\end{equation*}
	
	Additionally, the solution of variance in the posterior is given by:
	\begin{equation}
		\begin{aligned}
			\sigma^2_{k+1} &\eqdef 
			\begin{bmatrix}
				\Sigma_{\x,\x} & \Sigma_{\x,\y} \\
				\Sigma_{\y,\x} & \Sigma_{\y,\y}
			\end{bmatrix} \\ &
			\;
			= \mathbf{K}_0 - \mathbf{A} \mathbf{K}^{-1}_{\xstar,\xstar} \mathbf{A}\T.
		\end{aligned}
		\label{eq:var}
	\end{equation}
	\label{theo:local}
\end{theorem}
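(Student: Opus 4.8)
The plan is to obtain the joint posterior by treating differentiation as a linear operator acting on an object that is already jointly Gaussian, and then invoking the standard partitioned-Gaussian conditioning identity. First I would observe that, because differentiation is linear, the stacked vector $(f(\xstar),\,f(\x),\,f'(\y))$ is jointly Gaussian under the GP prior: each component of $f'(\y)$ is a mean-square limit of difference quotients of $f$, and a limit of jointly Gaussian vectors is jointly Gaussian. Its mean and covariance blocks are then obtained by differentiating the corresponding blocks of the prior mean and of the kernel $k$, and this is exactly where Lemma~\ref{lem:dct} enters: it legitimizes $\mathbb{E}[f'(\y)\mid f(\xstar)] = \frac{\dd}{\dd{\y}}\mathbb{E}[f(\y)\mid f(\xstar)]$ --- the interchange already invoked in step $(a)$ of~\eqref{eq:mean_1} --- and, applied once for the cross term and twice for the $f'$--$f'$ term, also $\mathrm{Cov}\bigl(f(\x),f'(\y)\bigr) = \frac{\dd}{\dd{\y}}\mathrm{Cov}\bigl(f(\x),f(\y)\bigr)$ and $\mathrm{Cov}\bigl(f'(\y),f'(\y)\bigr) = \frac{\dd^2}{\dd{\y}\dd{\y}}\mathrm{Cov}\bigl(f(\y),f(\y)\bigr)$.

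Carrying this out at the prior level, the prior covariance of $(f(\x),f'(\y))$ is exactly the matrix $\mathbf{K}_0$ of the statement, the prior cross-covariance of that pair with the observation vector $f(\xstar)$ is the matrix $\mathbf{A}$, and the prior covariance of $f(\xstar)$ is $\mathbf{K}_{\xstar,\xstar}$. I would then apply the textbook formula for conditioning one block of a joint Gaussian on another: conditioning $(f(\x),f'(\y))$ on $f(\xstar)$ produces a Gaussian whose mean is $\mathbf{A}\,\mathbf{K}^{-1}_{\xstar,\xstar}\,(f(\xstar)-\mu_0(\xstar))$ (with the prior mean at the new points taken to be zero, as in~\eqref{eq:f_x} and~\eqref{eq:fprime_y}) and whose covariance is $\mathbf{K}_0-\mathbf{A}\,\mathbf{K}^{-1}_{\xstar,\xstar}\,\mathbf{A}\T$; these are precisely~\eqref{eq:mean_2} and~\eqref{eq:var}. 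Equivalently --- and this is the route the displayed chain in~\eqref{eq:mean_1} already follows --- one may start from the closed-form GP posterior mean and covariance of $f$ alone, which are smooth deterministic functions of the query points, and differentiate those expressions in $\y$ directly; Lemma~\ref{lem:dct} is then what certifies that the differentiated expression is genuinely the posterior moment of $f'(\y)$ and not merely a formal manipulation.

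The step I expect to be the main obstacle is the rigorous justification of the two nested applications of Lemma~\ref{lem:dct} needed for the $f'$--$f'$ covariance entry: one must exhibit an integrable dominating random variable for $\frac{\p}{\p t}g(t,Y)$ and then again for the derivative of the already-differentiated integrand, while keeping careful track of which slot of the kernel each derivative acts on --- bookkeeping that is easy to garble because $k(\y,\y)$ carries the variable in both arguments, so the block $\frac{\dd^2}{\dd{\y}\dd{\y}}k(\y,\y)$ involves partials with respect to both slots. A secondary subtlety is the multivariate setting: for $\bm{x}\in\mathbb{R}^n$ the derivative of $f$ is a gradient, so $\mathbf{A}$ and $\mathbf{K}_0$ become block matrices and Lemma~\ref{lem:dct}, stated in one dimension, must be invoked coordinate-wise. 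Once these interchanges are secured, what remains is the routine Schur-complement algebra and the matching of terms to the claimed block forms.
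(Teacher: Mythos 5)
Your proposal is correct and follows essentially the same route as the paper: the paper likewise obtains the mean by differentiating the standard GP posterior formulas under the interchange justified by Lemma~\ref{lem:dct} (step $(a)$ of~\eqref{eq:mean_1}) and reads off the covariance from the partitioned prior blocks $\mathbf{K}_0$, $\mathbf{A}$, and $\mathbf{K}_{\xstar,\xstar}$ via the usual Gaussian conditioning formula. Your framing through the jointly Gaussian stacked vector $(f(\xstar), f(\x), f'(\y))$, and your attention to the coordinate-wise use of the lemma and to the two-slot derivative in $\frac{\dd^2}{\dd{\y}\dd{\y}}k(\y,\y)$, are if anything more careful than the paper's informal derivation.
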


In Theorem~\ref{theo:local}, $ k(\cdot) $ in \eqref{eq:mean_2} denotes the kernel function, such as the squared-exponential and Matern kernels, which define the influence of a solution on the performance and confidence estimations of untested nearby solutions. Apart from that, in variance~\eqref{eq:var}, $ \mathbf{K} $ represents the covariance matrix, where $ [\mathbf{K}_{\xstar,\xstar}]_{ij} = k(\boldsymbol{x}_i,\boldsymbol{x}_j) $

Furthermore, we provide two examples using the squared-exponential kernel and polynomial kernel, respectively, to illustrate Theorem~\ref{theo:local} as below.

\begin{example}[Square-exponential kernel]
	As a popular kernel function widely used in many existing works, the squared-exponential kernel can strongly connect spatially adjacent sampling points as those points are more similar. The kernel is defined by the equation below:
	\begin{equation}
		k(\boldsymbol{x}_i,\boldsymbol{x}_j)=\alpha \exp(-\frac{\Vert \boldsymbol{x}_i-\boldsymbol{x}_j \Vert_2^2}{2l^2}),
		\label{eq:rbf}
	\end{equation}
	where $ \alpha $ and $ l $ are scale factor and length scale parameter introduced by the kernel, respectively.
	
	For simplification, we assume $ \y=\x $. By leveraging the given square-exponential kernel~\eqref{eq:rbf} and computing its gradient, the mean in Theorem~\ref{theo:local} becomes:
	\begin{equation}
		\mu_{k+1} = \Bar{\mathbf{A}} \mathbf{K}^{-1}_{\xstar,\xstar} f(\xstar),
		\label{eq:rbf_mean}
	\end{equation}
	where $ f(\xstar) $ are known initial sample values and $ \Bar{\mathbf{A}} $ is given by:
	\begin{equation*}
		\Bar{\mathbf{A}} = \alpha
		\begin{bmatrix}
			\exp(-\frac{\Vert \x-\xstar \Vert_2^2}{2l^2}) \\
			-\frac{\x-\xstar}{l^2} \exp(-\frac{\Vert \x-\xstar \Vert_2^2}{2l^2})
		\end{bmatrix}.
	\end{equation*}
	
	Likewise, we obtain the variance as:
	\begin{equation}
		\begin{aligned}
			\sigma^2_{k+1} = \alpha
			\begin{bmatrix}
				1 & \mathbf{0} \\
				\mathbf{0} & \frac{1}{l^2} \mathbf{I}_n
			\end{bmatrix} -
			\Bar{\mathbf{A}} \mathbf{K}^{-1}_{\xstar,\xstar} \Bar{\mathbf{A}}\T,
		\end{aligned}
		\label{eq:rbf_var}
	\end{equation}
	where $ \mathbf{I}_n $ represents the identity matrix of dimension $ n $.
\end{example}

\begin{example}[Polynomial kernel]
	In this example, we consider another frequently-used kernel, the Polynomial kernel, which represents the similarity of sampling points in the space over polynomials, allowing the learning of non-linear models. The kernel is defined as:
	\begin{equation}
		k(\boldsymbol{x}_i,\boldsymbol{x}_j)=\bar{\alpha} (\boldsymbol{x}_i \cdot \boldsymbol{x}_j - c)^\delta,
		\label{eq:poly}
	\end{equation}
	where $ c \ge 0 $ is a free parameter trading off the influence of higher-order versus lower-order terms in the polynomial, $ \bar{\alpha} $ is the scale factor, and $ \delta $ denotes the polynomial's dimension.
	
	For simplification, let $ \y=\x $. In the most typical situation, the kernel is a homogeneous polynomial kernel where $ c=0 $, and the dimension $ \delta=2 $. Then, we compute its gradient by leveraging the given polynomial kernel~\eqref{eq:poly}, and the mean in Theorem~\ref{theo:local} becomes:
	\begin{equation}
		\mu_{k+1} = \Bar{\Bar{\mathbf{A}}} \mathbf{K}^{-1}_{\xstar,\xstar} f(\xstar),
		\label{eq:poly_mean}
	\end{equation}
	where $ f(\xstar) $ are known initial sample values and $ \Bar{\Bar{\mathbf{A}}} $ is given by:
	\begin{equation*}
		\Bar{\Bar{\mathbf{A}}} = \bar{\alpha}
		\begin{bmatrix}
			(\x \cdot \xstar)^2 \\
			2(\x \cdot \xstar) \cdot \xstar
		\end{bmatrix}.
	\end{equation*}
	
	Likewise, we obtain the variance as:
	\begin{equation}
		\begin{aligned}
			&\sigma^2_{k+1} \\
			&= \bar{\alpha}
			\begin{bmatrix}
				\Vert \x \Vert^4 &  2\Vert \x \Vert^2 \cdot \x \\
				2\Vert \x \Vert^2 \cdot \x & 2(\x \otimes \x + \Vert \x \Vert^2 \mathbf{I}_n)
			\end{bmatrix} \\ 
			&\quad - \Bar{\Bar{\mathbf{A}}} \mathbf{K}^{-1}_{\xstar,\xstar} \Bar{\Bar{\mathbf{A}}}\T,
		\end{aligned}
		\label{eq:poly_var}
	\end{equation}
	where $ \otimes $ denotes the outer product and $ \mathbf{I}_n $ represents the identity matrix of dimension $ n $.
\end{example}

\subsection{Determining Local Optima}

We manage to access the posterior of any $ \x $ via the designed GP model. The next step is determining the next test point to evaluate, which can be obtained through the acquisition function (AF). Due to the joint distribution in~\eqref{eq:joint_posterior}, to guarantee the discovery of a local optimum solution, we need 1) the value of the objective function at the test point to be larger than elsewhere nearby, and 2) the first-order derivative of the objective function at the same point to be close to zero. This step is another optimization problem regarding $ p_{f,f'} $, but does not require evaluating the objective function $ f(\cdot) $. 

To start with, we use conditional distribution expansion to change the form of the joint probability distribution. The new joint posterior becomes:
\begin{equation}
	p_{f,f'} = (p_f|p_{f'}) \cdot p_{f'},
	\label{eq:expand}
\end{equation}
where $ p_f|p_{f'} $ obeys the Gaussian distribution with the mean of:
\begin{equation}
	\bar{\mu}_{k+1} = \mu_{\x}+\Sigma_{\x,\y}\Sigma_{\y,\y}^{-1}(f'(\y)-\mu_{\y}),
	\label{eq:af_mean}
\end{equation}
and variance of:
\begin{equation}
	\bar{\sigma}^2_{k+1} = \Sigma_{\x,\x}-\Sigma_{\x,\y}\Sigma_{\y,\y}^{-1}\Sigma_{\y,\x}.
	\label{eq:af_var}
\end{equation}

To satisfy the two aforementioned requirements, we let $ \x $ and $ \y $ be at the same position, where we have: 
\begin{equation*}
	p_{f'} = f'(\y)|f(\xstar) = f'(\x)|f(\xstar).
\end{equation*}

According to~\eqref{eq:expand}, we further expand the joint distribution in~\eqref{eq:joint_posterior} to fulfill the needs of the AF design. After that, referring to the original AF definition, we newly define several AFs for determining the local optimum solution given in the following corollaries as examples.

Firstly, we provide the joint probability of improvement, which evaluates the objective function at the point most likely to improve upon the value of the current observations. Under such criterion, the point with the highest probability of improvement will be selected.

\begin{corollary}[Joint probability of improvement]
	In our case, when $ \x=\y $, the AF objective is:
	\begin{equation}
		\begin{aligned}
			&a_{PI}(\x) \\
			&\stackrel{(b)}{\approx} \int_{\xi}^{\infty} (p_f|p_{f'}) \dd{\xi} \cdot \int_{-\epsilon}^{\epsilon} p_{f'} \dd{\epsilon} \\
			&= \resizebox{.95\hsize}{!}{$ Q\left(\frac{\xi-\bar{\mu}_{k+1}}{\bar{\sigma}^2_{k+1}}\right)\left[Q\left(\frac{-\epsilon-\mu_{\y}}{\Sigma_{\y,\y}}\right)-Q\left(\frac{\epsilon-\mu_{\y}}{\Sigma_{\y,\y}}\right)\right], $}
		\end{aligned}
		\label{eq:pi}
	\end{equation}
	where (b) uses expansion~\eqref{eq:expand} and an approximation in calculating integral, $ \xi $ is the probability of improvement threshold, $ \epsilon $ is a small constant to restrict the first-order derivative, and $ Q(\cdot) $ denotes the Q-function with $ \bar{\mu}_{k+1} $ and $ \bar{\sigma}_{k+1} $ given in~\eqref{eq:af_mean} and~\eqref{eq:af_var}, respectively.
	\label{coro:pi}
\end{corollary}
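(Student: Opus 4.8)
The plan is to unpack the left-hand side from its operational meaning: when $\x=\y$, the joint probability of improvement is the probability that the surrogate simultaneously clears the improvement threshold $\xi$ and has a gradient within the small band $(-\epsilon,\epsilon)$, that is, $a_{PI}(\x)=\Pr\bigl(f(\x)>\xi,\ f'(\x)\in(-\epsilon,\epsilon)\ \big|\ f(\xstar)\bigr)$, where the conditional law of $(f(\x),f'(\x))\mid f(\xstar)$ is exactly the joint Gaussian $p_{f,f'}$ of~\eqref{eq:joint_posterior}, whose mean and covariance are delivered by Theorem~\ref{theo:local}. First I would write this probability as the integral of the joint density over the region $\{f(\x)>\xi\}\times\{|f'(\x)|<\epsilon\}$ and then substitute the conditional factorization~\eqref{eq:expand}, $p_{f,f'}=(p_f\mid p_{f'})\cdot p_{f'}$. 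This turns $a_{PI}(\x)$ into the iterated integral $\int_{-\epsilon}^{\epsilon}\bigl(\int_{\xi}^{\infty}(p_f\mid p_{f'}=s)\,\dd t\bigr)\,p_{f'}(s)\,\dd s$, in which $p_f\mid p_{f'}$ is Gaussian with mean $\bar{\mu}_{k+1}$ of~\eqref{eq:af_mean} and variance $\bar{\sigma}^2_{k+1}$ of~\eqref{eq:af_var}, while $p_{f'}$ is the marginal $\mathcal{N}(\mu_\y,\Sigma_{\y,\y})$ from~\eqref{eq:fprime_y}.

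The second step is the approximation labelled $(b)$. Since $\epsilon$ is small the conditioning value $s=f'(\y)$ lives in a short interval, and $\bar{\mu}_{k+1}=\mu_\x+\Sigma_{\x,\y}\Sigma_{\y,\y}^{-1}(s-\mu_\y)$ is affine in $s$ with total variation at most $\|\Sigma_{\x,\y}\Sigma_{\y,\y}^{-1}\|\cdot 2\epsilon$ over the range of integration, while the conditional variance $\bar{\sigma}^2_{k+1}$ does not depend on $s$ at all. I would therefore freeze the inner integral at the representative value $s\approx 0$ (the first-order-condition target), pull it out of the outer integral, and obtain the product $\bigl(\int_{\xi}^{\infty}(p_f\mid p_{f'})\,\dd t\bigr)\cdot\bigl(\int_{-\epsilon}^{\epsilon}p_{f'}(s)\,\dd s\bigr)$. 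To keep this honest I would record an error bound: the replacement perturbs the inner integral by at most $\tfrac{1}{\sqrt{2\pi}}\,\|\Sigma_{\x,\y}\Sigma_{\y,\y}^{-1}\|\,\epsilon/\bar{\sigma}_{k+1}$ (using that $Q$ is Lipschitz with constant $1/\sqrt{2\pi}$), so the overall error is $O(\epsilon)\cdot\int_{-\epsilon}^{\epsilon}p_{f'}$, which is precisely why $(b)$ is an approximation rather than an identity.

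The last step is elementary Gaussian integration. The first factor is the upper tail of a normal, $\int_{\xi}^{\infty}\mathcal{N}(t;\bar{\mu}_{k+1},\bar{\sigma}^2_{k+1})\,\dd t=Q\!\bigl((\xi-\bar{\mu}_{k+1})/\bar{\sigma}_{k+1}\bigr)$ by the definition of the $Q$-function; the second is the mass of a normal on a symmetric band, $\int_{-\epsilon}^{\epsilon}\mathcal{N}(s;\mu_\y,\Sigma_{\y,\y})\,\dd s=\Phi\!\bigl((\epsilon-\mu_\y)/\sqrt{\Sigma_{\y,\y}}\bigr)-\Phi\!\bigl((-\epsilon-\mu_\y)/\sqrt{\Sigma_{\y,\y}}\bigr)$, which equals $Q\!\bigl((-\epsilon-\mu_\y)/\sqrt{\Sigma_{\y,\y}}\bigr)-Q\!\bigl((\epsilon-\mu_\y)/\sqrt{\Sigma_{\y,\y}}\bigr)$ after using $\Phi=1-Q$. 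Assembling the two factors yields~\eqref{eq:pi}. I expect the only real obstacle to be presenting step $(b)$ cleanly: the inner integral genuinely depends on the conditioning value, so the stated equality holds only up to the $O(\epsilon)$ term above, and the writeup should flag it as a controlled approximation; everything else follows directly from~\eqref{eq:expand}, Theorem~\ref{theo:local}, and standard facts about the normal distribution.
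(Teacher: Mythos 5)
Your proposal is correct and follows essentially the same route as the paper's own (informal) justification: factor the joint density via~\eqref{eq:expand}, approximate the double integral by a product of two one-dimensional integrals on the grounds that $\epsilon$ is small, and evaluate each Gaussian integral as a Q-function; your explicit $O(\epsilon)$ error bound for step (b) is a refinement the paper omits. Note also that your normalization by the standard deviations $\bar{\sigma}_{k+1}$ and $\sqrt{\Sigma_{\y,\y}}$ inside the Q-function arguments is the mathematically correct one, whereas the displayed formula in the corollary divides by the variances $\bar{\sigma}^2_{k+1}$ and $\Sigma_{\y,\y}$.
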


Note that we split the original integral function and let $ p_f|p_{f'} \ge \xi $ and $ |p_{f'}| \le \epsilon $ to suit our purpose in determining local optimum solution. In (a) of~\eqref{eq:pi}, when calculating acquisition, due to small $ \epsilon $, the standard double integral can be approximated by the product of two integrals. As $ p_f|p_{f'} $ and $ p_{f'} $ follow Gaussian distribution, using Q-function, which is the tail distribution function of the standard Gaussian distribution, simplifies the AF in the expression.

Next, we define a joint expected improvement that evaluates the objective function at the point that improves upon the value of the current observations in terms of expectation. Under this criterion, the point with the greatest expected improvement will be selected.

\begin{corollary}[Joint expected improvement]
	In our setting, when $ \x=\y $, the AF objective becomes:
	\begin{equation}
		\begin{aligned}
			&a_{EI}(\x) \\
			&\stackrel{(c)}{\approx} \int_{\xi}^{\infty}(f(\x)-\xi) (p_f|p_{f'}) \dd{\xi} \cdot \int_{-\epsilon}^{\epsilon} p_{f'} \dd{\epsilon} \\
			&= \int_{\xi}^{\infty}(f(\x)-\xi) (p_f|p_{f'}) \dd{\xi} \\
			&\quad \cdot \left[Q\left(\frac{-\epsilon-\mu_{\y}}{\Sigma_{\y,\y}}\right)-Q\left(\frac{\epsilon-\mu_{\y}}{\Sigma_{\y,\y}}\right)\right],
		\end{aligned}
	\end{equation}
	where (c) adopts expansion~\eqref{eq:expand} and an approximation in calculating integral. Other parameters are defined the same as those in Corollary~\ref{coro:pi}.
	\label{coro:ei}
\end{corollary}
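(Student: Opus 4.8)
\emph{Proof proposal.} The plan is to mirror the derivation of Corollary~\ref{coro:pi}, replacing the tail probability of the conditional $p_f|p_{f'}$ by its truncated first moment. First I would put the joint expected improvement in integral form: starting from the usual one-sided improvement $(f(\x)-\xi)^{+}$, the multimodal criterion additionally insists that the gradient be near a stationary value, so
\begin{equation*}
a_{EI}(\x) = \mathbb{E}\!\left[(f(\x)-\xi)\,\mathbf{1}\{f(\x)>\xi\}\,\mathbf{1}\{|f'(\x)|<\epsilon\}\right],
\end{equation*}
the expectation being over the joint posterior $p_{f,f'}$ of~\eqref{eq:joint_posterior} at $\x=\y$. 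Writing this expectation against the joint density gives an integral of $(f(\x)-\xi)$ over the slab $\{f>\xi\}\times\{|f'|<\epsilon\}$.

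Next I would apply the conditional factorization~\eqref{eq:expand}, $p_{f,f'}=(p_f|p_{f'})\cdot p_{f'}$, turning the double integral into an iterated one: an outer integral over $f'\in(-\epsilon,\epsilon)$ with density $p_{f'}\sim\mathcal{N}(\mu_{\y},\Sigma_{\y,\y})$, and an inner integral over $f>\xi$ of $(f(\x)-\xi)$ against the conditional density $p_f|p_{f'}$, whose mean $\bar{\mu}_{k+1}$ and variance $\bar{\sigma}^2_{k+1}$ are given by~\eqref{eq:af_mean} and~\eqref{eq:af_var}. The observation that licenses step (c) is that the only $f'$-dependence of the inner integral enters through the term $\Sigma_{\x,\y}\Sigma_{\y,\y}^{-1}(f'-\mu_{\y})$ in $\bar{\mu}_{k+1}$, which is $O(\epsilon)$ across the thin slab; hence the inner integral is, to leading order, constant in $f'$ and may be pulled outside. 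What remains is the product of (i) the standard expected-improvement integral $\int_{\xi}^{\infty}(f(\x)-\xi)(p_f|p_{f'})\,\dd{\xi}$ against the conditional Gaussian, left in integral form exactly as stated, and (ii) the Gaussian mass $\int_{-\epsilon}^{\epsilon}p_{f'}\,\dd{\epsilon}$ over the symmetric interval, which by the definition of the Q-function equals $Q\!\big(\tfrac{-\epsilon-\mu_{\y}}{\Sigma_{\y,\y}}\big)-Q\!\big(\tfrac{\epsilon-\mu_{\y}}{\Sigma_{\y,\y}}\big)$, reproducing the claimed second factor.

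The main obstacle is making the factorization in step (c) precise rather than heuristic. I would control it with a Lipschitz bound on the Gaussian density in its mean argument: for $|s|<\epsilon$ the conditional density of $f(\x)$ given $f'(\x)=s$ differs from the one at $s=0$ by at most $C|s|\le C\epsilon$ uniformly in $f(\x)$, with $C$ depending only on $\bar{\sigma}^2_{k+1}$ and $\Sigma_{\x,\y}\Sigma_{\y,\y}^{-1}$; consequently the error from replacing $p_f|p_{f'}$ by its value at $f'=0$ before integrating over $f'$ is bounded by a constant multiple of $\epsilon$ relative to the leading term, so the product approximation becomes exact as $\epsilon\to0$. Everything else — expanding the expectation, invoking~\eqref{eq:expand}, and evaluating the symmetric Gaussian integral — is routine and follows the same template as the proof of Corollary~\ref{coro:pi}, the only difference being the extra weight $(f(\x)-\xi)$ that turns the inner tail probability into a truncated first moment.
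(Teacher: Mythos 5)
Your proposal is correct and follows essentially the same route as the paper: write the joint expected improvement as an expectation over the joint posterior~\eqref{eq:joint_posterior}, factor it via the conditional expansion~\eqref{eq:expand}, approximate the double integral by a product of integrals because $\epsilon$ is small, and evaluate the symmetric Gaussian mass over $(-\epsilon,\epsilon)$ with the Q-function. The only difference is that you make the product approximation quantitative with a Lipschitz bound on the conditional density in its mean argument, which is a strengthening of the paper's purely heuristic ``due to small $\epsilon$'' justification rather than a different method.
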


Two AFs introduced in the corollaries will be used with our new BO framework for determining the local optima.

\section{Experiment}
\label{sec:experiment}

In this section, we perform thorough experiments on the synthetic multimodal functions and real-world implementation to test our algorithm's effectiveness in finding local optima. Additionally, we report the ablations and scalability experiments of the designed algorithm on multimodal functions. The code has been made available at: \url{https://github.com/ysmei97/local_bo}.

\begin{figure*}[!htb]
	\begin{minipage}{0.36\textwidth}
		\centering
		\begin{subfigure}[t]{0.49\textwidth}
			\centering
			\includegraphics[width=\textwidth]{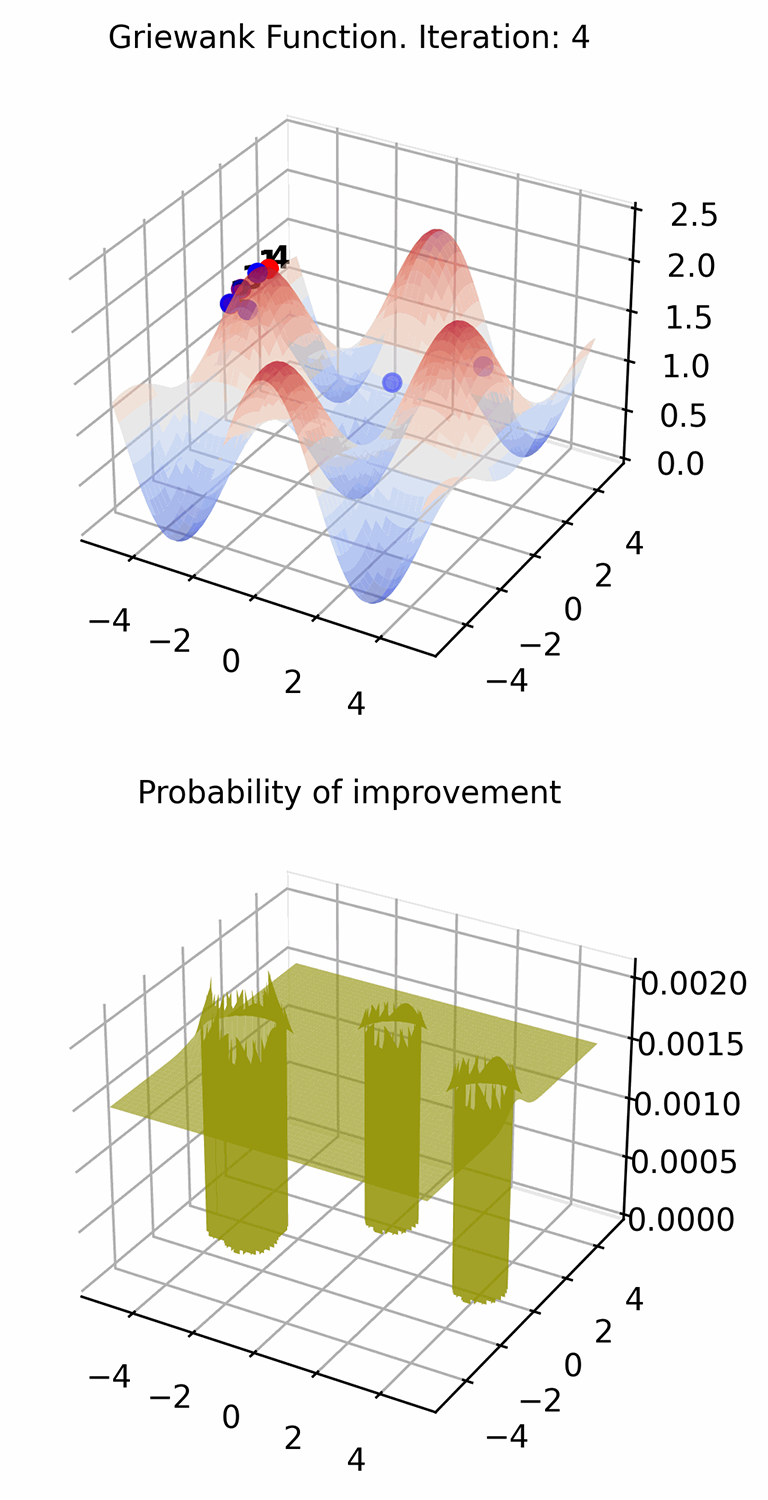}
			\caption{\small Step 13: 1st optimum}
			\label{fig:2d_pi_4}
		\end{subfigure}
		\begin{subfigure}[t]{0.49\textwidth}
			\centering
			\includegraphics[width=\textwidth]{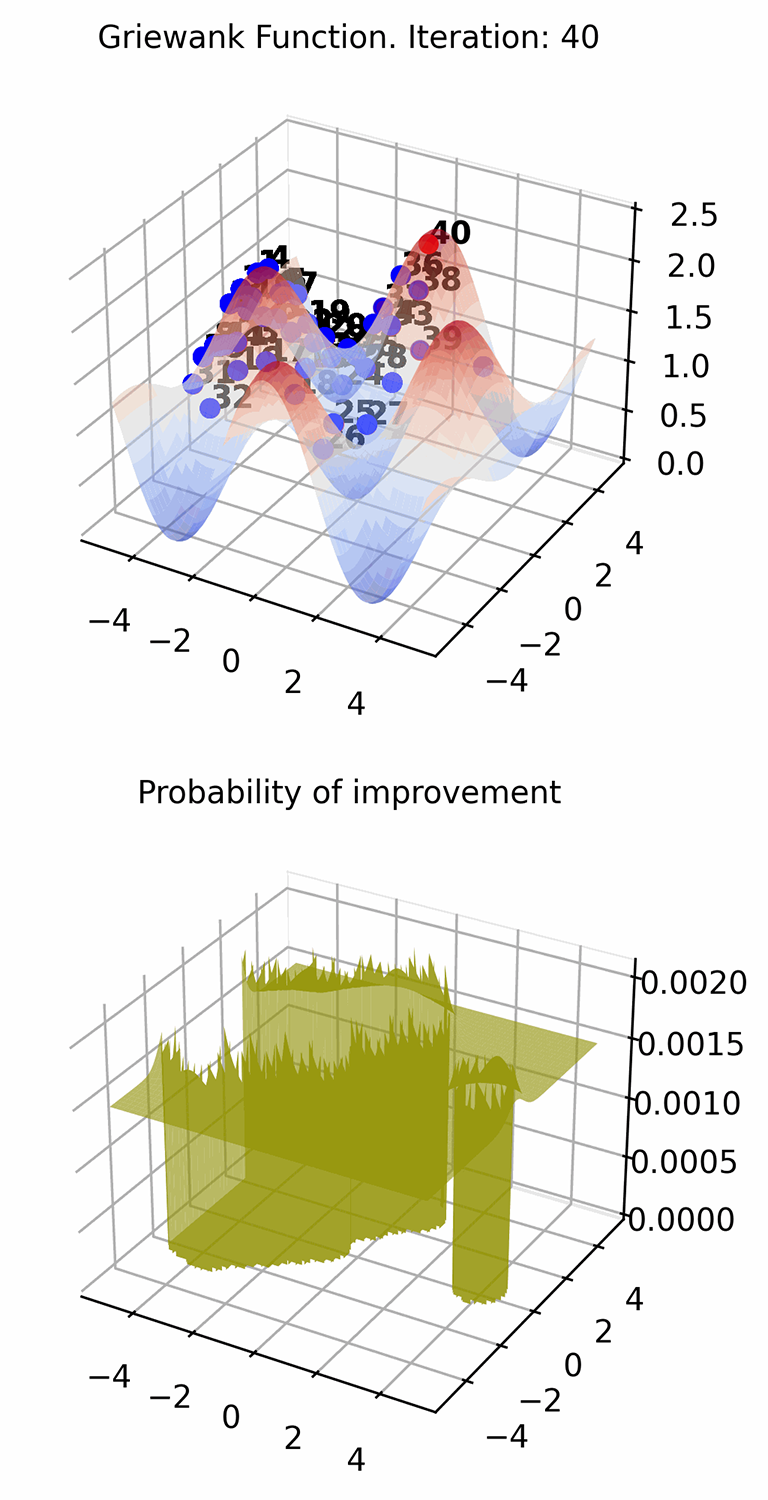}
			\caption{\small Final result}
			\label{fig:2d_pi_40}
		\end{subfigure}
		\caption{Finding local optima on Griewank function via $ a_{PI} $.}
		\label{fig:2d_griewank}
	\end{minipage}%
	\hspace{0.1in}
	\begin{minipage}{0.6\textwidth}
		\centering
		\begin{subfigure}[t]{0.32\textwidth}
			\centering
			\includegraphics[width=\textwidth]{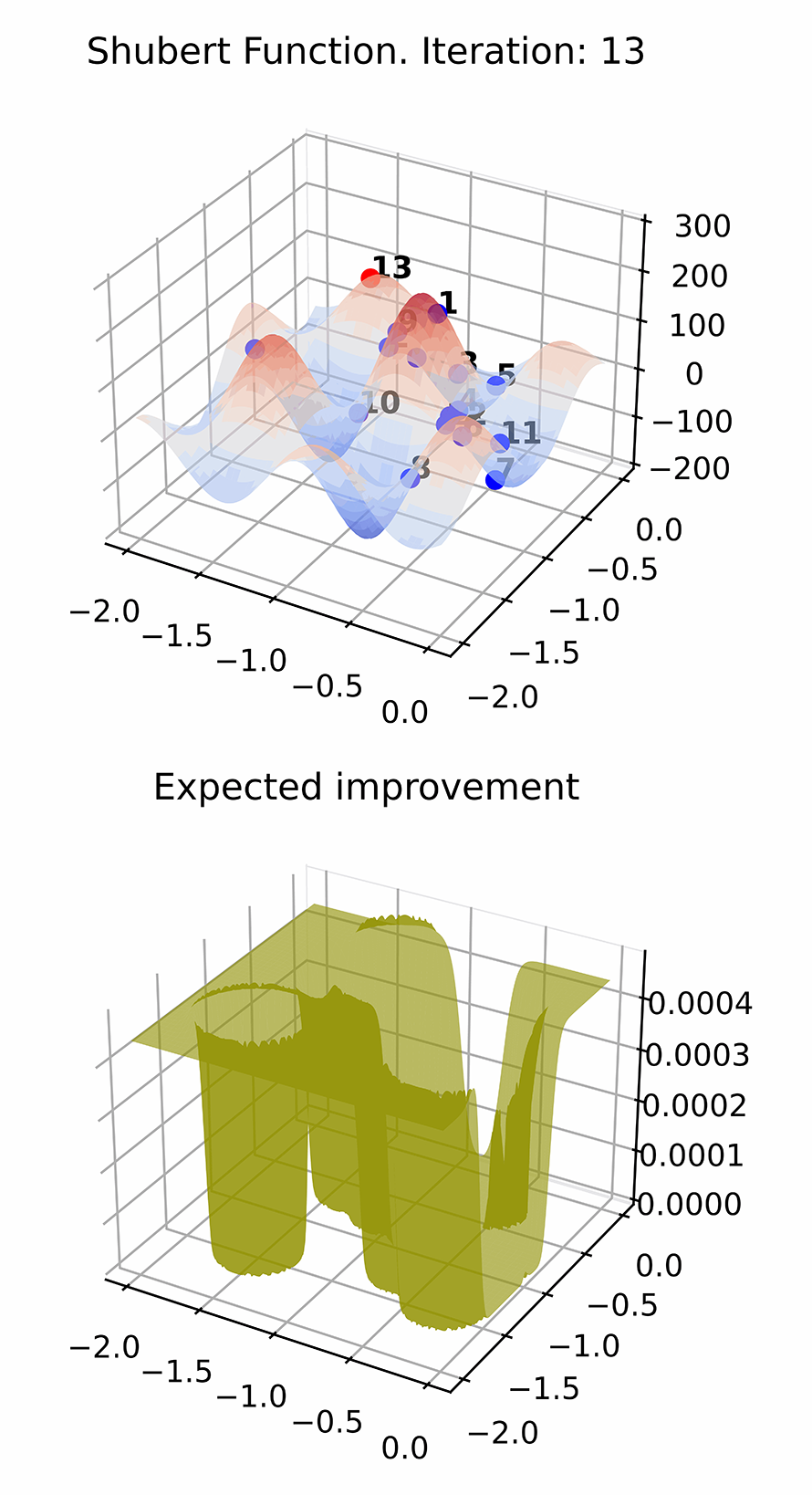}
			\caption{\small Step 13: 1st optimum}
			\label{fig:2d_ei_13}
		\end{subfigure}
		\begin{subfigure}[t]{0.32\textwidth}
			\centering
			\includegraphics[width=\textwidth]{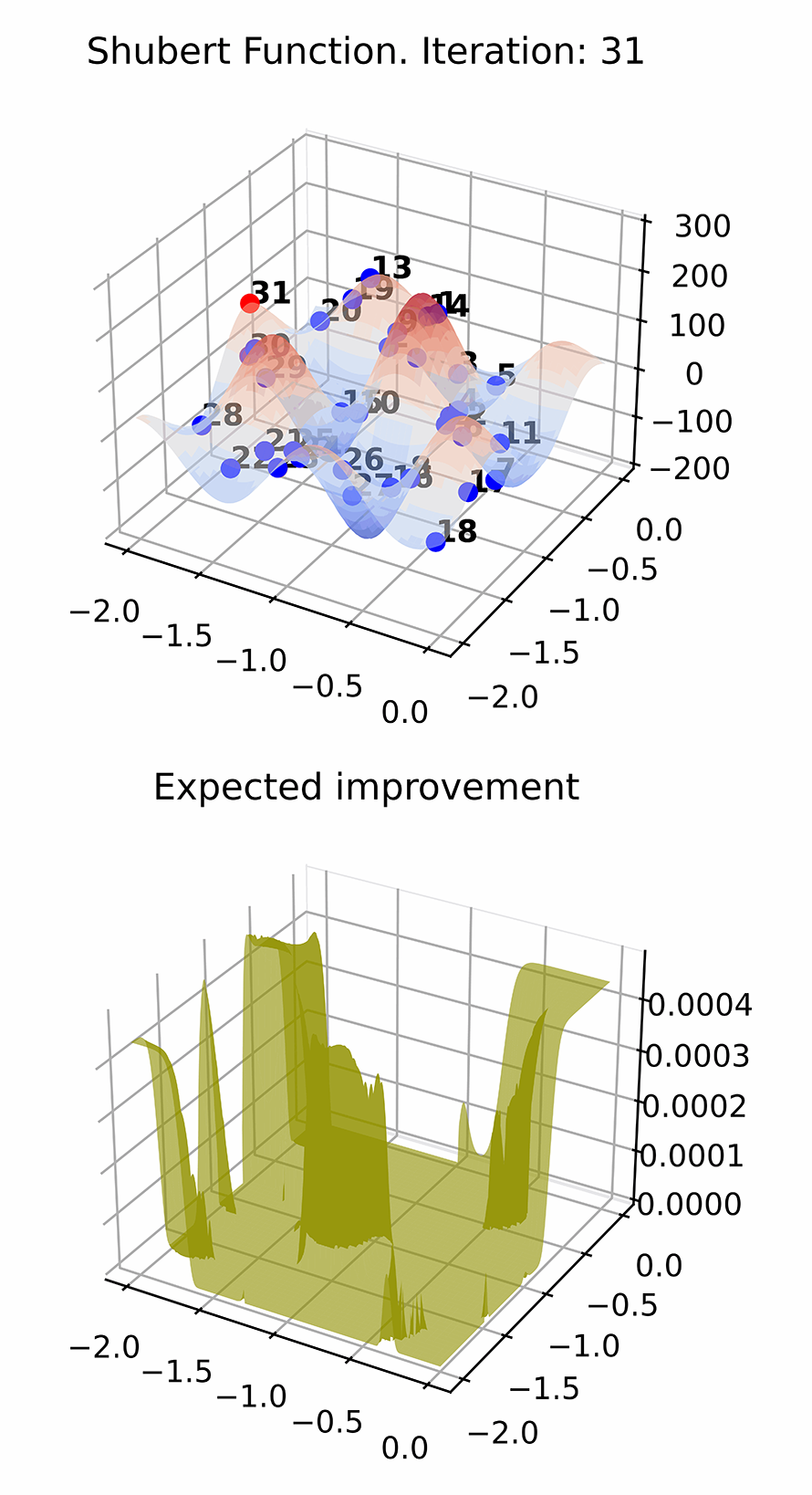}
			\caption{\small Step 31: 2nd optimum}
			\label{fig:2d_ei_31}
		\end{subfigure}
		\begin{subfigure}[t]{0.32\textwidth}
			\centering
			\includegraphics[width=\textwidth]{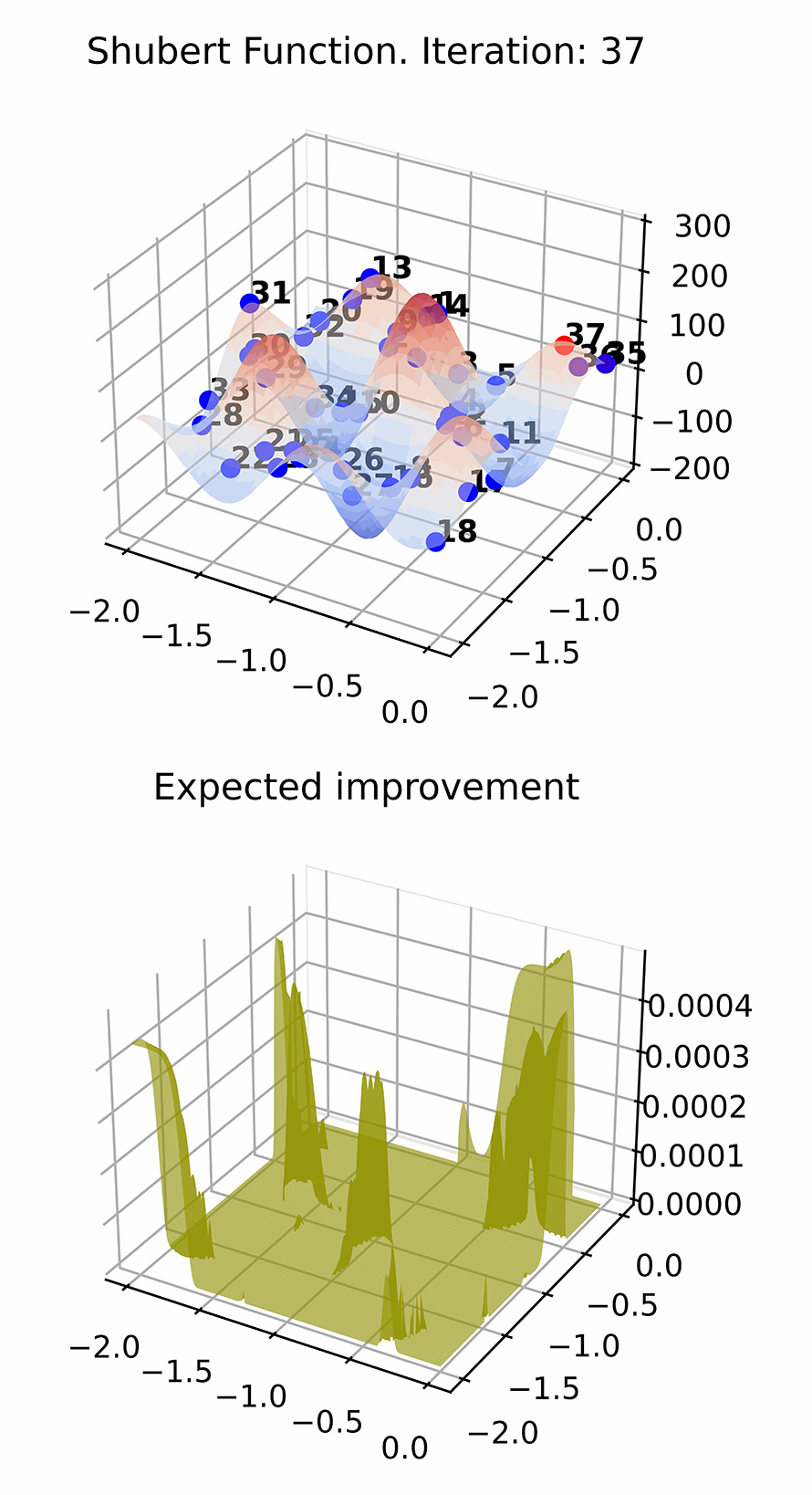}
			\caption{\small Step 37: 3rd optimum}
			\label{fig:2d_ei_37}
		\end{subfigure}
		\caption{Finding local optima on Shubert function via $ a_{EI} $.}
		\label{fig:2d_shubert}
	\end{minipage}
\end{figure*}

\subsection{Multimodal Function}

We implement our framework on two common 2D functions (i.e., the Griewank function and the Shubert function) as benchmark objective functions for this experiment. The results are shown in Fig.~\ref{fig:2d_griewank} and Fig.~\ref{fig:2d_shubert}. The red dots in the figures represent current sampling points, and the blue dots denote previously sampled points. To visualize the sampling order, we show the number of the current step above each dot. Each experiment is tested for 40 iterations.

\subsubsection{Benchmark 1: Griewank Function}

The Griewank function has many widespread local minima, which are regularly distributed. We consider the case where input dimension $ m $ equals 2 and the range of input in each dimension satisfies $ x_i \in [-5, 5] $. The function's graph shows four optima under such conditions in Figure~\ref{fig:2d_griewank}.

We adopt the squared-exponential kernel with $ \alpha $ of 10 and $ l $ of 0.1, and select three random sampling points as priors, which are fixed in each test. Fig.~\ref{fig:2d_pi_40} shows the final result of using joint PI with $ \xi $ of 1 and $ \epsilon $ of 0.1. We manage to reveal a total of two local optima: the first local optimum at step 4 (Fig.~\ref{fig:2d_pi_4}) and the second one at step 40 (Fig.~\ref{fig:2d_pi_40}). The acquisition values for potential sampling points that can be the local optima are higher, while for selected points and their neighbors are zero. Since all optima on the Griewank function are regularly distributed, the points on the grid have nearly similar acquisition values.

\subsubsection{Benchmark 2: Shubert Function} 

As another commonly-used function in optimization problem analysis, the Shubert function is a 2D multimodal function with multiple local and global optima. This function is often evaluated on the square $ x_1,x_2 \in [-10,10] $. To allow for easier viewing, we restrict the domain to $ x_1, x_2 \in [-2, 0] $, presented in Figure~\ref{fig:2d_shubert}.

We use designed joint EI as AF with $ \xi $ of 0 and $ \epsilon $ of 0.1. As shown in Fig.~\ref{fig:2d_shubert}, our framework can find the most local optima over time. We list several key steps where local optima are found in Fig.~\ref{fig:2d_ei_13}, Fig.~\ref{fig:2d_ei_31}, and Fig.~\ref{fig:2d_ei_37}, which illustrates the effectiveness of our design.

\begin{figure*}[t]
	\centering
	\begin{subfigure}[t]{0.22\textwidth}
		\centering
		\includegraphics[width=\textwidth]{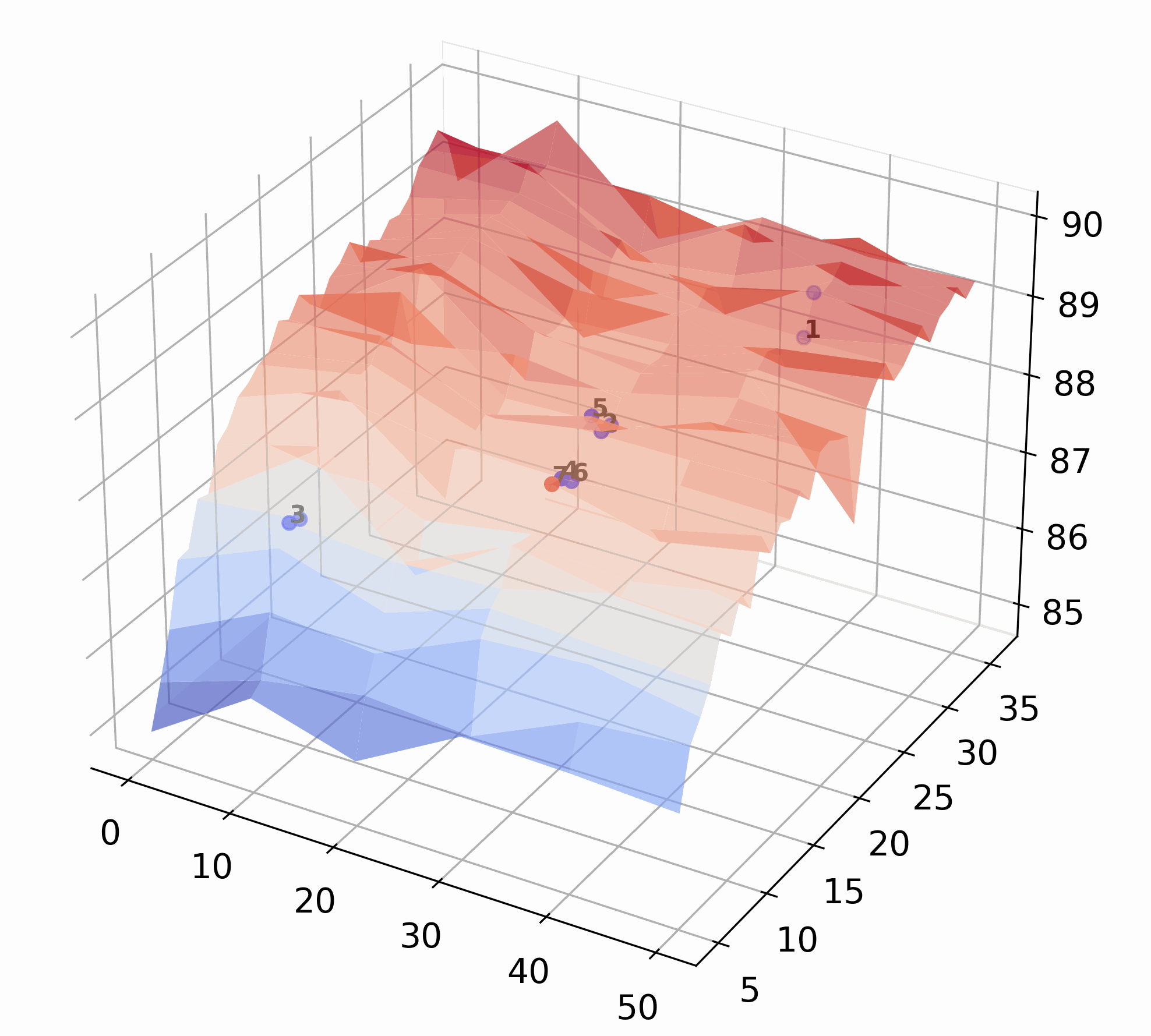}
		\caption{\small Step 7: 1st optimum}
		\label{fig:cifar_7}
	\end{subfigure}
	\begin{subfigure}[t]{0.22\textwidth}
		\centering
		\includegraphics[width=\textwidth]{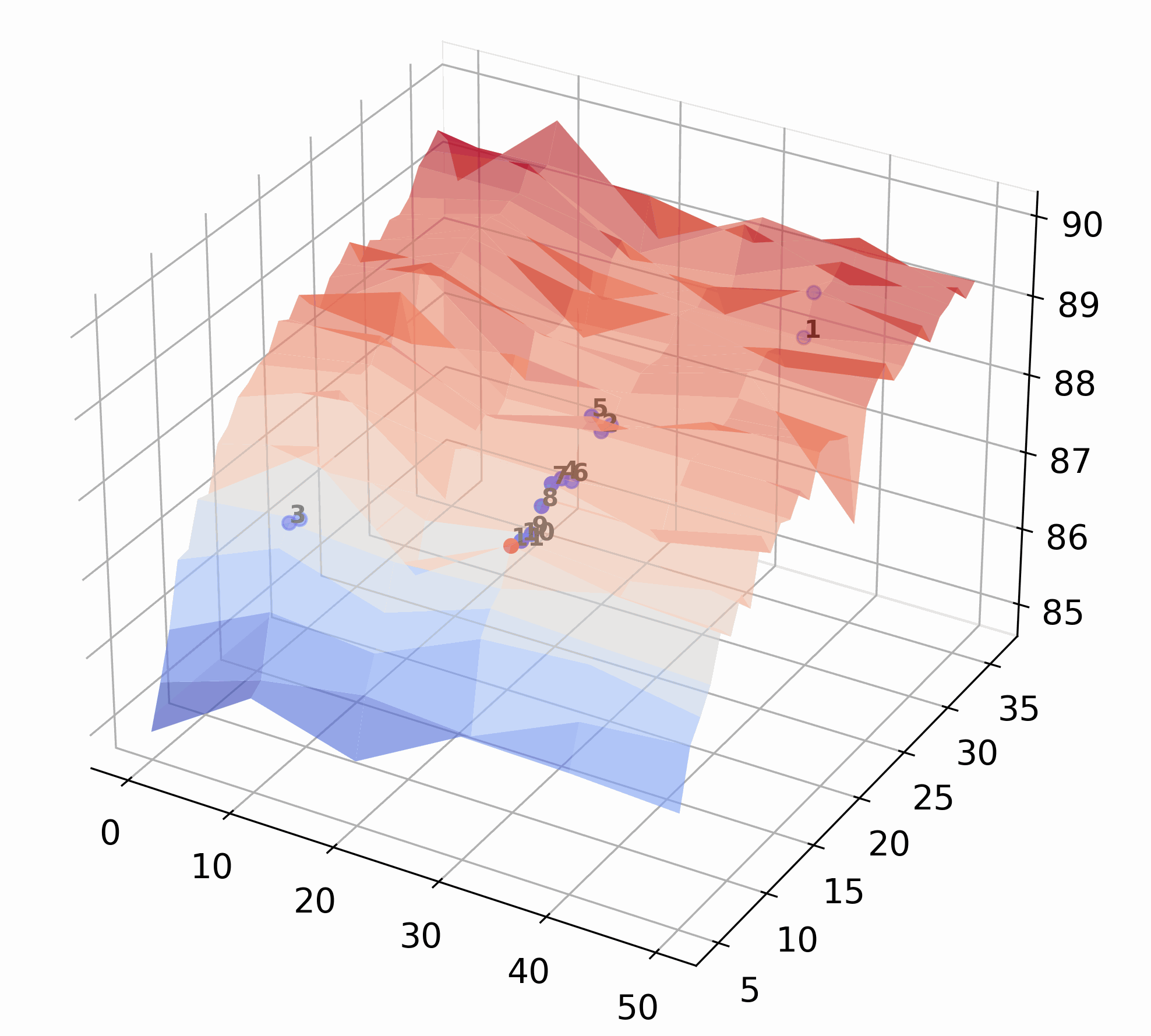}
		\caption{\small Step 11: 2nd optimum}
		\label{fig:cifar_11}
	\end{subfigure}
	\begin{subfigure}[t]{0.22\textwidth}
		\centering
		\includegraphics[width=\textwidth]{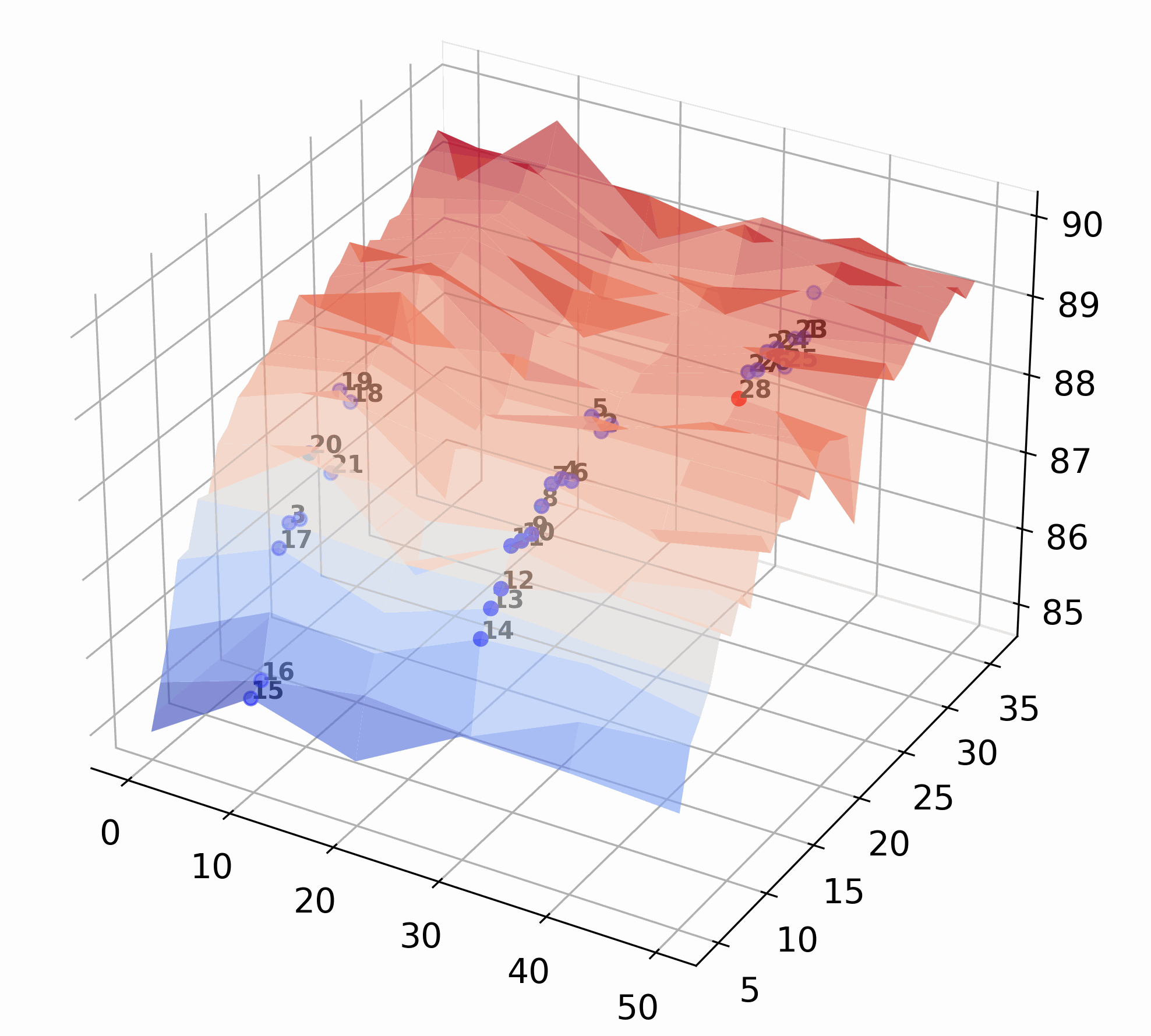}
		\caption{\small Step 28: 3rd optimum}
		\label{fig:cifar_28}
	\end{subfigure}
	\begin{subfigure}[t]{0.22\textwidth}
		\centering
		\includegraphics[width=\textwidth]{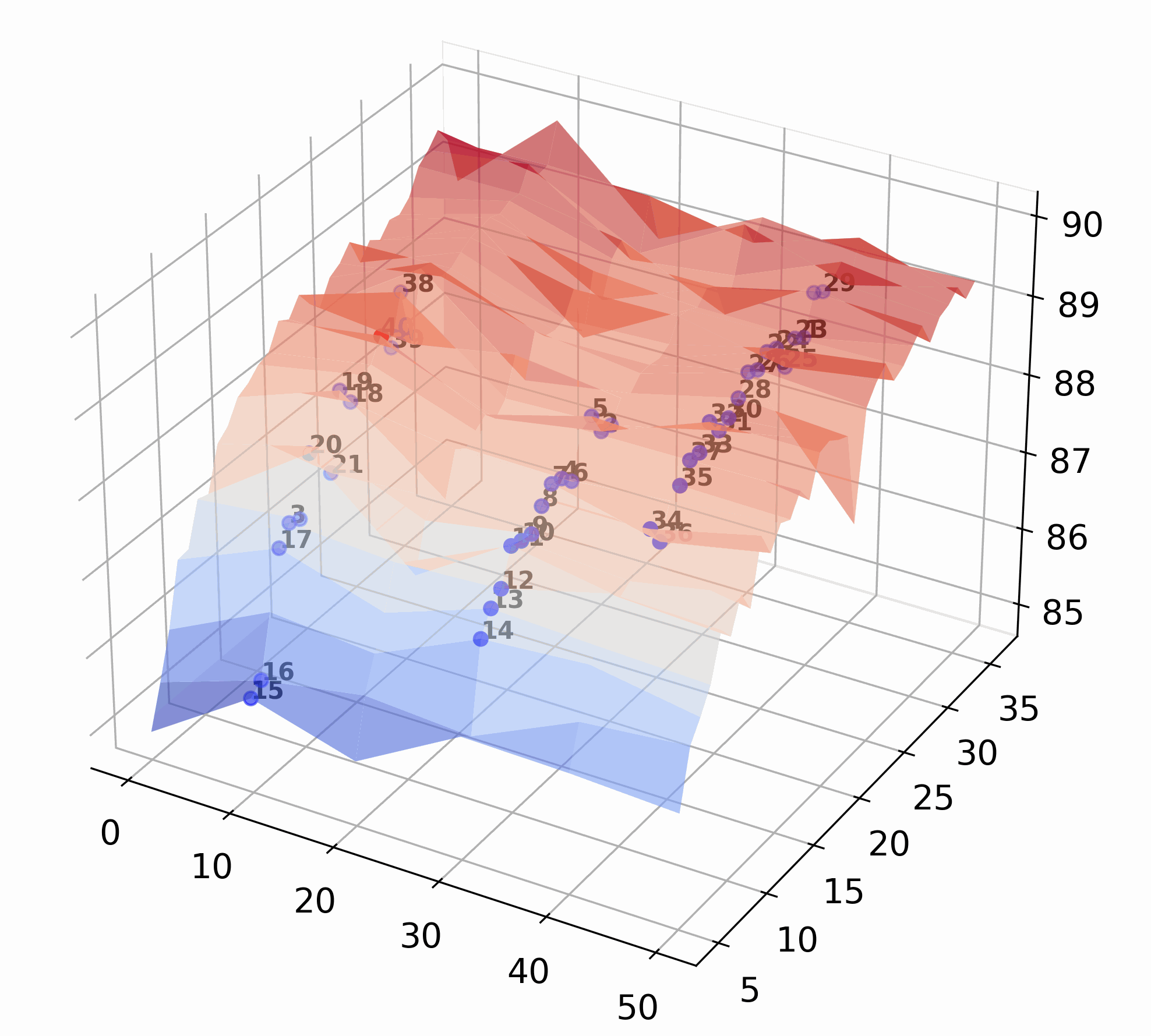}
		\caption{\small Final result}
		\label{fig:cifar_40}
	\end{subfigure}
	\caption{Searching for local optima with $ a_{EI} $ on CIFAR-10 image classification model of backbone ResNeXt.}
	\label{fig:2d_cifar}
\end{figure*}

\subsection{Hyperparameter Tuning in Classification Task}

Adopting the best hyperparameters, even the known ones, are not always feasible in many scenarios where we have additional hardware or runtime restrictions, especially for some expensive machine learning tasks. In these situations, we leverage the local optimal hyperparameters as a compromise. In this part, as a simulation of mentioned cases, we aim to verify our algorithm's capability for finding local optimum hyperparameter combinations on a real-world image classification problem.

\subsubsection{Dataset}

The dataset we use for the image classification task is CIFAR-10, a standard dataset consisting of 60000 32 $ \times $ 32 color images in 10 mutually exclusive classes with 50000 training images and 10000 test images. To fully utilize the dataset, we did necessary preprocessing, including reshaping and padding, with no loss of the information.

\subsubsection{Modeling and Experimentation}

We build the model using ResNeXt~\cite{xie2017aggregated} as the backbone. Among hyperparameters introduced by ResNeXt, we mainly focus on the \textit{layer} of the network and the \textit{width} of the subblock and leave the \textit{cardinality} fixed. Based on this, we create an image classification model as the objective function, where the inputs are the layer and width and output is the validation accuracy. Since training online introduces latent nuance in searching local optima every step, we test our framework to determine local optimum hyperparameter combinations in an offline manner, i.e., we apply our algorithm to a trained model with all the samples ready.

In the experiment, we set cardinality to 4 and vary width from 5 to 35. Besides, we consider 6 layer candidates, which are 29, 38, 50, 68, 86, and 101. Using an interval of 10, we map the 6 layer candidates into a range from 0 to 50 as approximated representations. As shown in Fig.~\ref{fig:2d_cifar}, the accuracy increases with width and layer projection values. However, such an increase is not monotonic, and we can spot many local optimum solutions. Since the mean of all results is around 87.0, we set the $ \xi $ as 87 and $ \epsilon $ as 0.01. We adopt squared-exponential kernel, with $ \alpha $ of 5 and $ l $ of 2.

Fig.~\ref{fig:2d_cifar} exemplifies some steps in which our method finds local optima. Compared to the width, the layer in this scenario impacts more on output accuracy. Therefore, the algorithm prioritizes to explore the local optima of different widths under a specific layer and then searches elsewhere for different layers. For instance, in Fig.~\ref{fig:cifar_11} and Fig.~\ref{fig:cifar_28}, we spend multiple iterations searching for the optima with the layer of 68 before switching to explore that with the layer of 86.

\begin{figure*}[ht!]
	\centering
	\begin{subfigure}[t]{0.28\textwidth}
		\centering
		\includegraphics[width=\textwidth]{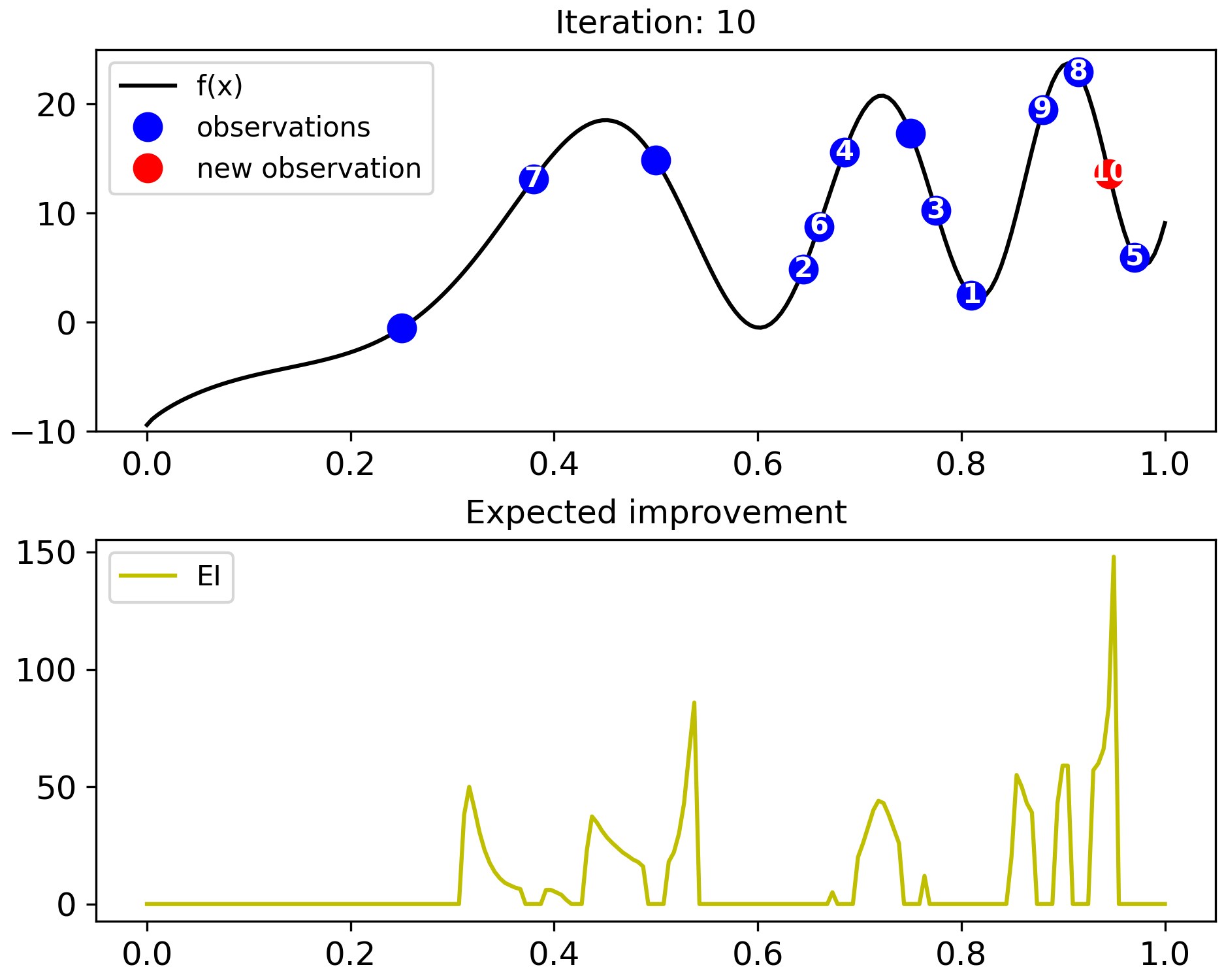}
		\caption{\small Posterior only}
		\label{fig:abla_pred}
	\end{subfigure}
	\begin{subfigure}[t]{0.28\textwidth}
		\centering
		\includegraphics[width=\textwidth]{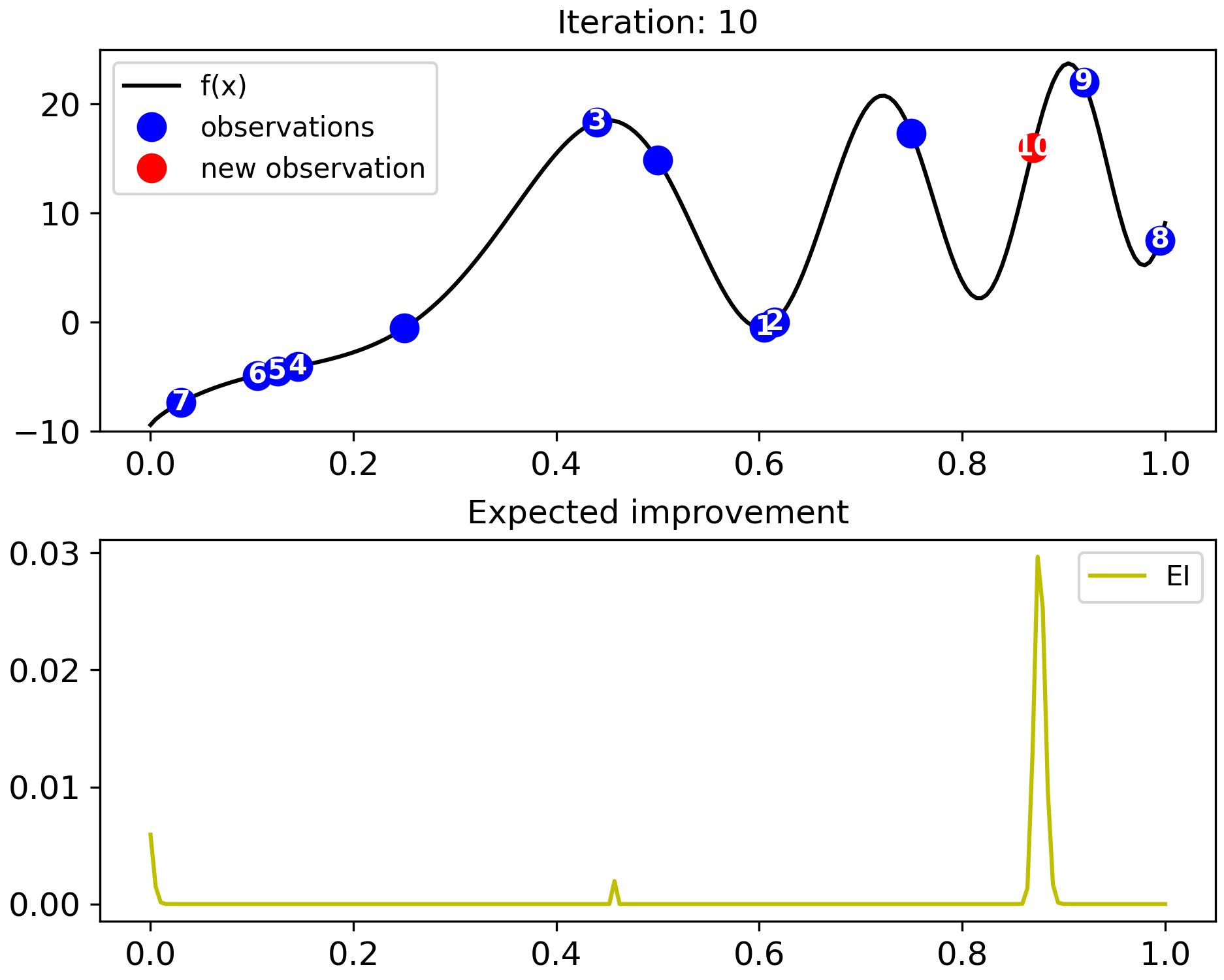}
		\caption{\small First-order derivative only}
		\label{fig:abla_deriv}
	\end{subfigure}
	\begin{subfigure}[t]{0.28\textwidth}
		\centering
		\includegraphics[width=\textwidth]{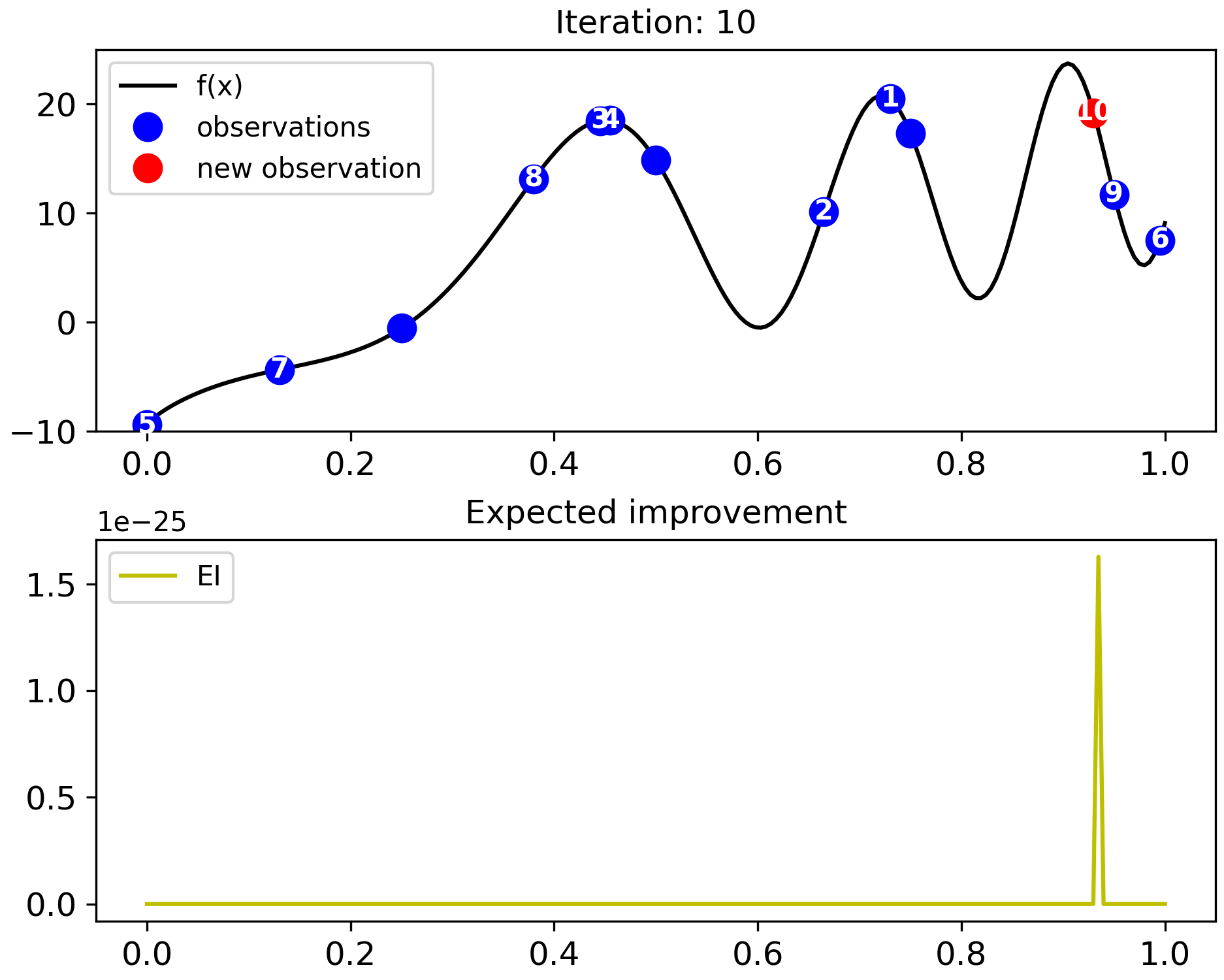}
		\caption{\small Our method (posterior$ + $derivative)}
		\label{fig:abla_ours}
	\end{subfigure}
	\caption{Ablations regarding the Gaussian posterior of the objective (standard BO) or its first-order derivative only.}
	\label{fig:abla}
\end{figure*}

\subsection{Comparison with other baselines}

\begin{table*}
	\caption{Comparison between proposed framework with other baselines in finding the local and global optimum solutions.}
	\centering
	\begin{tabular}{lccccccccc}
		\toprule
		\multirow{2}{*}{Baselines} & \multicolumn{2}{c}{1: Global maxima} & \multicolumn{2}{c}{2: 2nd large local maxima} & \multicolumn{2}{c}{3: 3rd large local maxima} & \multicolumn{3}{c}{Average Distance every 30 steps}\\
		 & Step & Average Distance & Step & Average Distance & Step & Average Distance & Step 30 & Step 60 & Step 90 \\
		\midrule
		Joint PI & 78 & 0.062 & 36 & 0.040 & 12 & 0.034 & 0.043 & 0.068 & 0.059\\
		Joint EI & 80 & 0.061 & 22 & 0.055 & 6 & 0.022 & 0.055 & 0.065 & 0.057 \\
		MPD & 20 & 0.092 & 37 & 0.109 & 47 & 0.089 & 0.128 & 0.080 & 0.081 \\
		GIBO & 25 & 0.151 & 31 & 0.135 & 60 & 0.131 & 0.139 & 0.131 & 0.134 \\
		Vanilla PI & 28 & 0.170 & -- & -- & 54 & 0.209 & 0.204 & 0.197 & 0.288 \\
		Vanilla EI & 18 & 0.090 & 66 & 0.174 & 81 & 0.187 & 0.135 & 0.176 & 0.179 \\
		\bottomrule
	\end{tabular}
	\label{tab:cmp}
\end{table*}

In this section, we compare our method with several selected baselines algorithms: (1) MPD \cite{nguyen2022local}, which uses maximum look-ahead descent probability; and (2) GIBO \cite{muller2021local}, which performs local search by minimizing the trace of the posterior covariance of the gradient only; (3) vanilla BO with PI; (4) vanilla BO with EI. We use a 1D multimodal synthetic function of which the inputs range from 0 to 1 with 200 sampling instances to verify the methods' capability of finding the local and global maxima. Each run has a budget of 100 steps. We show the results in Table~\ref{tab:cmp}, where our framework can locate the objective local maxima faster than other baseline methods. Given a specific threshold, our framework will prioritize locating the nearby local maximum solutions. Besides, compared to the model with the joint probability of improvement, the one with joint expected improvement has better performance owing to its awareness of the potential improvement amount. 

We also compute the average distance by adding the distance between new observation $ \bm{x}_i $ and ground truth $ \bm{x}^* $ of each step $ i $ and taking the average based on the number of already-taken steps $ n $ , given by $ \frac{\sum_i^n \bm{x}_i-\bm{x}^*}{n} $. For every new observation, the distance is recognized as the nearest ground truth $ \bm{x}^* $ to $ \bm{x}_i $. According to Table \ref{tab:cmp}, our framework achieves lower regret than other baseline methods, especially in deciding the local optimum solutions in experiments 2 and 3, demonstrating the effectiveness of finding local optima by adopting our approach.

\subsection{Scalability Experiment}

To demonstrate our method's scalability, we experiment with 3D Griewank functions for locating local/global optimal solutions. Due to the visualization limitation of high-dimensional functions, we post several steps of detected local maxima in Table~\ref{tab:3d_griewank}, where we show that the distance to ground truth becomes smaller in the process for three maxima. We run the experiment for 300 steps in total, with a minimum sampling distance of 0.1. As seen from the table, our algorithm can be easily extended to cope with optimization problems in higher dimensions following the same procedure as processing lower-dimensional multimodal functions. Specifically, for the 3D Griewank function shown in Table~\ref{tab:3d_griewank}, we successfully locate the solutions at steps 29, 169, and 245 with the distances to ground truth maxima as 0.024, 0, and 0.1, respectively.

\begin{table}
	\caption{Finding maxima on 3D Griewank function.}
	\centering
	\setlength{\tabcolsep}{3pt}
	\begin{tabular}{ccccccc}
		\toprule
		Step & Maxima & $ x_1 $ & $ x_2 $ & $ x_3 $ & $ f(x_1,x_2,x_3) $ & Distance \\
		\midrule
		27 & & -3.6 & 0.9 & 0.0 & 1.725 & 1.030 \\
		28 & & -3.0 & -0.8 & 0.2 & 1.833 & 0.831 \\ 
		\textbf{29} & 1 & -3.0 & -0.2 & 0.0 & \textbf{1.982} & \textbf{0.224}\\
		30 & & -3.4 & -0.2 & 0.0 & 1.907 & 0.361 \\
		31 & & -2.9 & 0.2 & -0.5 & 1.924 & 0.574 \\
		\midrule
		167 & & 0.4 & -4.2 & 0.0 & 1.912 & 0.447 \\
		168 & & -0.1 & -4.4 & 0.2 & 1.993 & 0.224 \\
		\textbf{169} & 2 & 0.0 & -4.4 & 0.0 & \textbf{2.004} & \textbf{0.000} \\
		170 & & 0.0 & -4.3 & -0.3 & 1.985 & 0.316 \\
		171 & & -0.3 & -4.4 & 0.2 & 1.953 & 0.361 \\
		\midrule
		243 & & -0.5 & 4.2 & -0.2 & 1.863 & 0.574 \\
		244 & & 0.2 & 4.3 & -0.1 & 1.978 & 0.245 \\
		\textbf{245} & 3 & 0.0 & 4.4 & -0.1 & \textbf{2.003} & \textbf{0.100} \\
		246 & & -0.3 & 4.4 & 0.5 & 1.934 & 0.583 \\
		247 & & -0.1 & 4.5 & -0.3 & 1.984 & 0.332 \\
		\bottomrule
	\end{tabular}
	\label{tab:3d_griewank}
\end{table}

\subsection{Ablation Experiment}

In this experiment, we present the ablation results by removing the first-order derivative and posterior of the objective in AF (which is expected improvement) and run the test on a synthetic function. We present the results at the tenth iteration. As can be seen in Fig.~\ref{fig:abla_pred}, once we remove the first-order derivative condition, the AF will be similar to the standard BO, and the algorithm will start looking for the points to maximize the acquisition value, failing to rapidly recognize other local/global maxima in this multimodal function. The two local maxima are not found during iterations, while our method successfully reveals them. In Fig.~\ref{fig:abla_deriv}, if we only consider the first-order derivative condition in choosing the next sampling point, the sequential search could easily get trapped in minima or stationary points without improving the objective value toward local maxima, such as points 1 and 2 or points 4, 5, and 6. In contrast, our algorithm can explicitly find the local maximum solutions by considering both conditions, given by Fig.~\ref{fig:abla_ours}, where the maxima are points 1 and 4. The ablations indicate that our method can detect the global maximum and local maxima for given objective functions.

\section{Conclusion}
\label{sec:conclusion}

We propose a BO framework for finding a set of local/global optima in multimodal objective functions that are expensive to evaluate. Given a posterior modeled as a Gaussian Process, our method considers the joint distribution of the objective function and its first-order derivative. Evaluations on benchmark functions, image classification tasks, and hyperparameter tuning problems demonstrate the effectiveness of our solution in finding local/global optima.

\ack This research is based on work supported by the Army Research Office (ARO) under Grant W911NF2110299.

\newpage
\bibliography{reference}

\newpage
\appendix

\section{Proof of Lemma~\ref{lem:dct}}
\label{subsec:proof_lem}

To investigate the theoretical support for interchangeable condition, as shown in the Lemma~\ref{lem:dct}, we have:
\begin{equation*}
	\begin{aligned}
		\frac{\p}{\p t} \mathbb{E}\left[g(t, Y)\right] &= \lim_{h\to 0} \frac1h \bigl( \mathbb{E}\left[g(t+h, Y)\right] - \mathbb{E}\left[g(t, Y)\right] \bigr) \\
		&= \lim_{h\to 0} \mathbb{E}\left[ \frac{g(t+h, Y) - g(t, Y)}{h} \right] \\
		&= \lim_{h\to 0} \mathbb{E}\left[ \frac{\partial}{\partial t} g(\tau(h), Y) \right],
	\end{aligned}
\end{equation*}
where $ \tau(h) \in (t, t+h) $ exists by the mean value theorem. By assumption we have:
\begin{equation*}
	\Bigl| \frac{\partial}{\partial t} g(\tau(h), Y) \Bigr| \leq Z,
\end{equation*}
and thus we can use the dominated convergence theorem to conclude:
\begin{equation*}
	\begin{aligned}
		\frac{\p}{\p t} \mathbb{E}\left[g(t, Y)\right] 
		&= \mathbb{E}\left[ \lim_{h\to 0} \frac{\partial}{\partial t} g(\tau(h), Y) \right] \\
		&= \mathbb{E}\left[ \frac{\partial}{\partial t} g(t, Y) \right].
	\end{aligned}
\end{equation*}
This concludes the proof.

\section{Sensitivity Experiment}

\begin{figure}[h]
	\centering
	\begin{subfigure}[t]{0.15\textwidth}
		\centering
		\includegraphics[width=\textwidth]{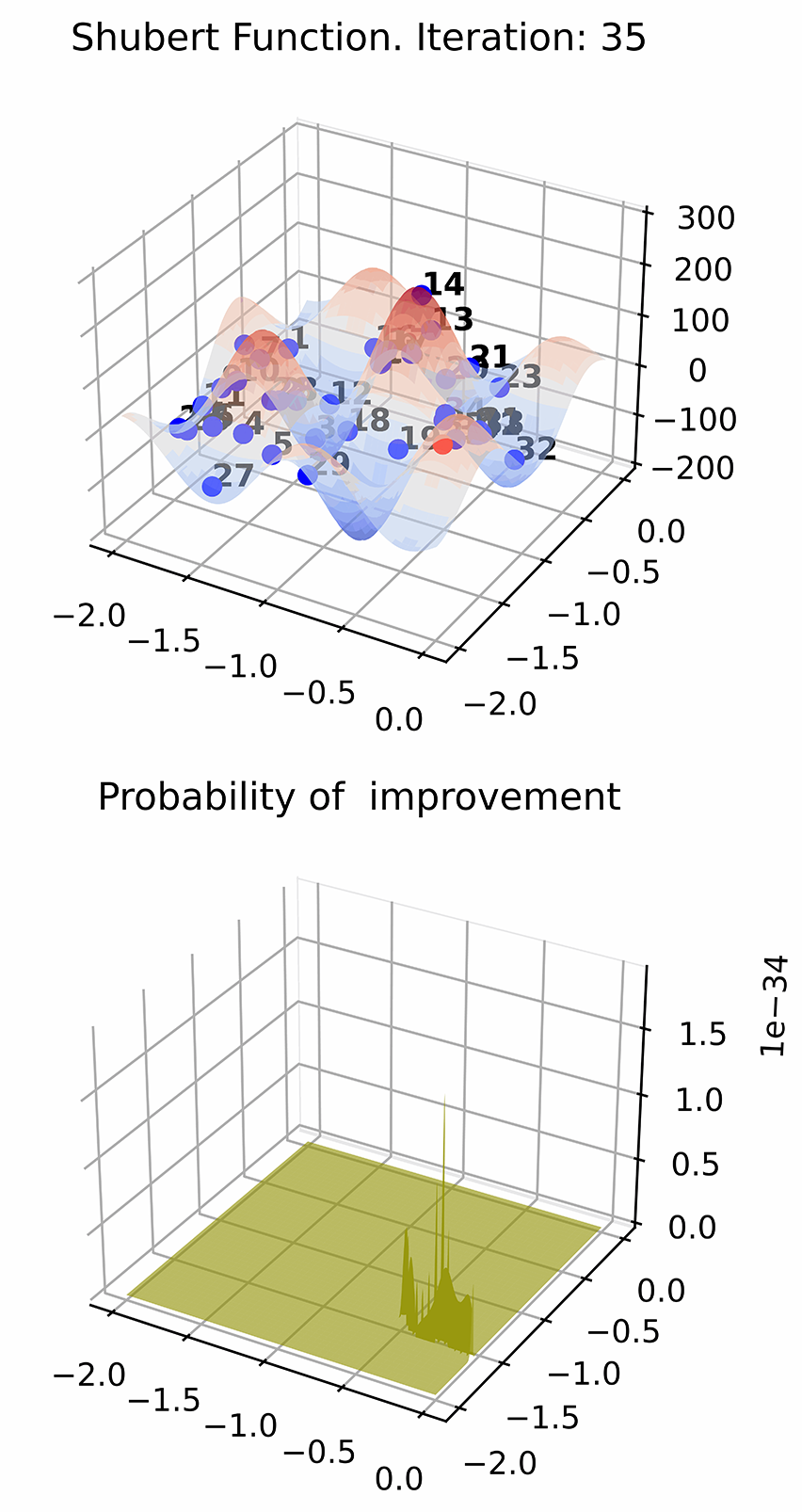}
		\caption{\small $ \alpha=10, l=0.1 $}
		\label{fig:sen_alpha10}
	\end{subfigure}
	\begin{subfigure}[t]{0.15\textwidth}
		\centering
		\includegraphics[width=\textwidth]{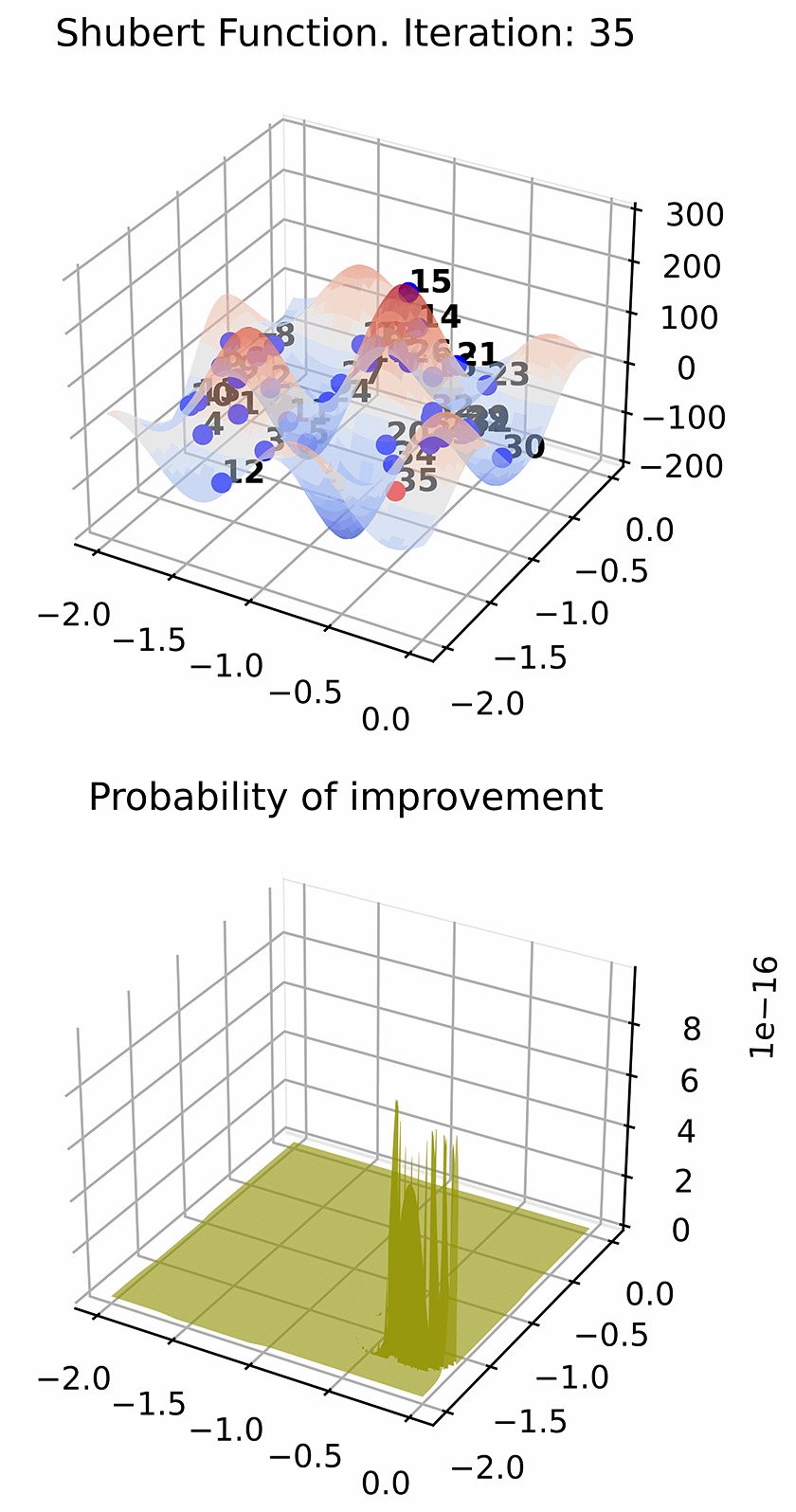}
		\caption{\small $ \alpha=30, l=0.1 $}
		\label{fig:sen_alpha30}
	\end{subfigure}
	\begin{subfigure}[t]{0.15\textwidth}
		\centering
		\includegraphics[width=\textwidth]{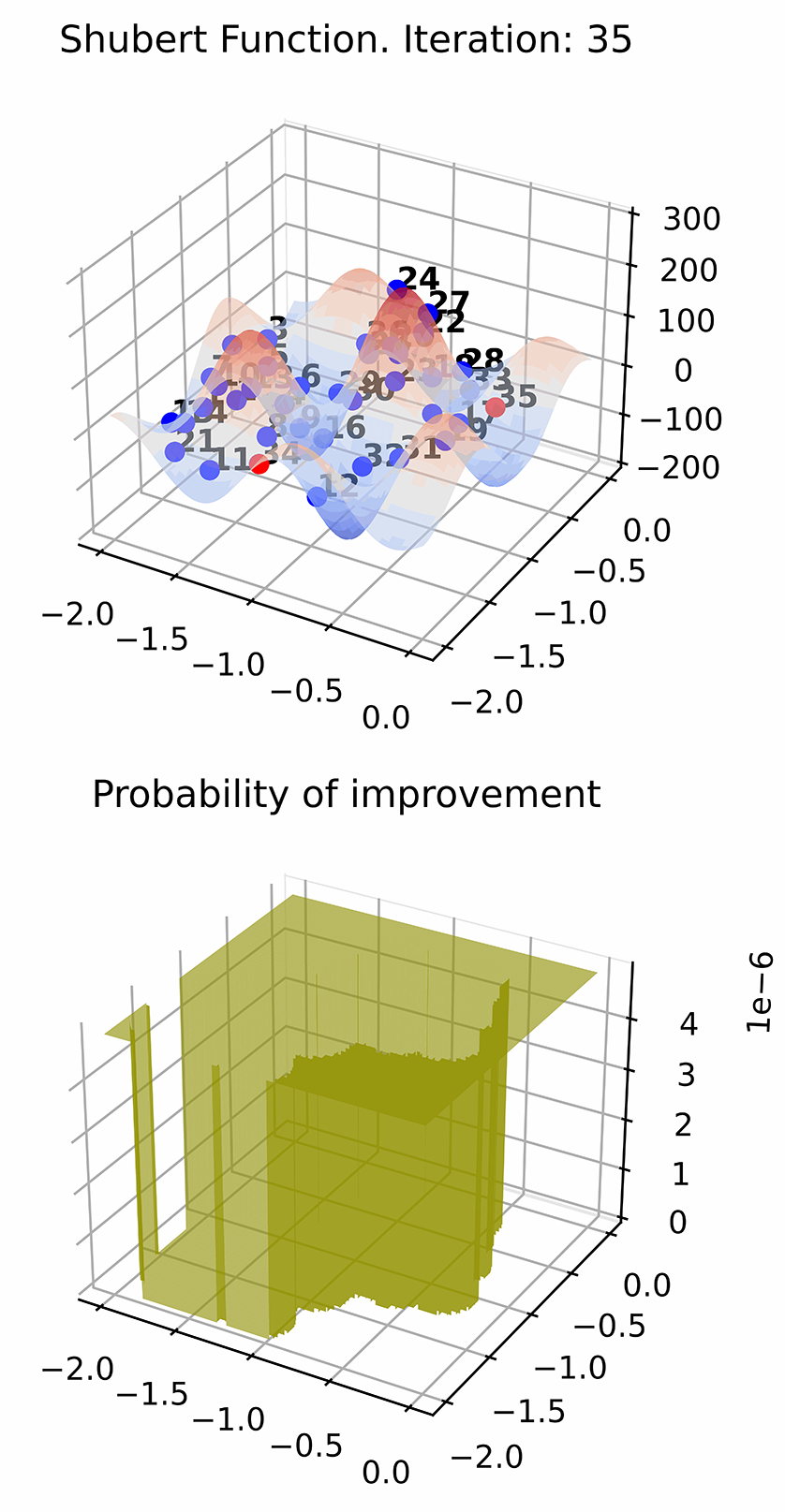}
		\caption{\small $ \alpha=10, l=0.05 $}
		\label{fig:sen_l0.05}
	\end{subfigure}
	\caption{Different scale factors with Shubert function.}
	\label{fig:sen_alpha_l}
\end{figure}

Additionally, we report sensitivities using the Shubert function as the benchmark regarding particular parameters used in our design, including scale factors $ \alpha $ and $ l $ in the squared-exponential kernel, threshold $ \xi $ in AFs, and the sampling distance $ d $. 

{\em Scale factor}: As local optimum solutions are found over time, the AF value will decrease. A fast-decreasing AF value for an objective function with multiple optimum solutions will let the algorithm overlook several unrevealed latent optima. To avoid that, we have to preset the scale factors of the squared-exponential kernel within an appropriate range. The experiments discuss the cases of tuning $ \alpha $ and $ l $. In Fig.~\ref{fig:sen_alpha30}, when we increase the $ \alpha $ from 10 to 30, after 35 iterations, the acquisition value is accordingly larger and thus increases the probability of searching for latent local optima in the indicated area. It is worth noting that over-increasing this parameter will make the sampling position vary too much. Moreover, decreasing $ l $ is another way to raise the acquisition value, as shown in Fig.~\ref{fig:sen_l0.05}, but this will reduce the discrimination of acquisition value of different points.

\begin{figure}[h]
	\centering
	\begin{subfigure}[t]{0.15\textwidth}
		\centering
		\includegraphics[width=\textwidth]{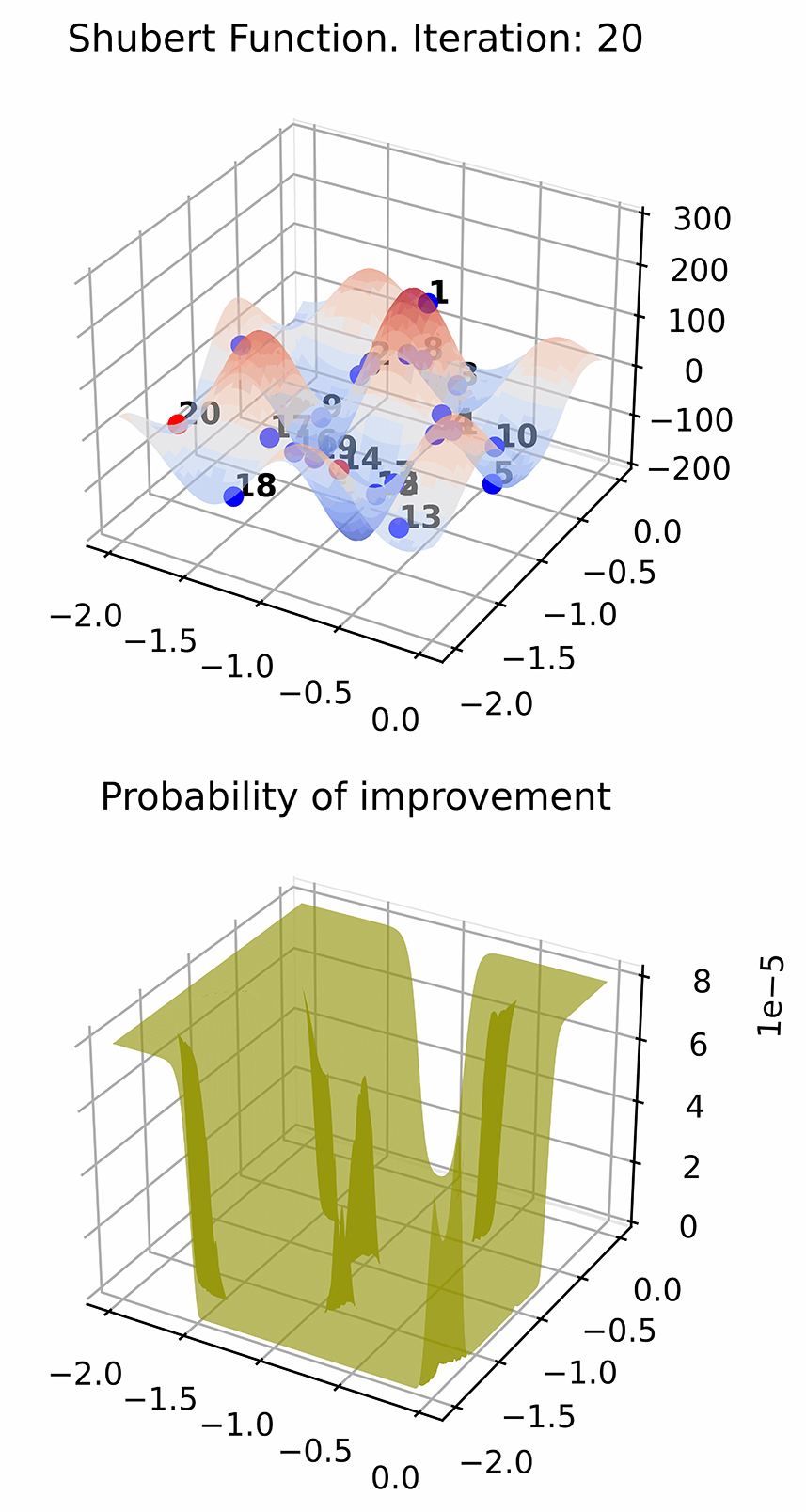}
		\caption{\small $ \xi=0 $}
		\label{fig:sen_t0}
	\end{subfigure}
	\begin{subfigure}[t]{0.15\textwidth}
		\centering
		\includegraphics[width=\textwidth]{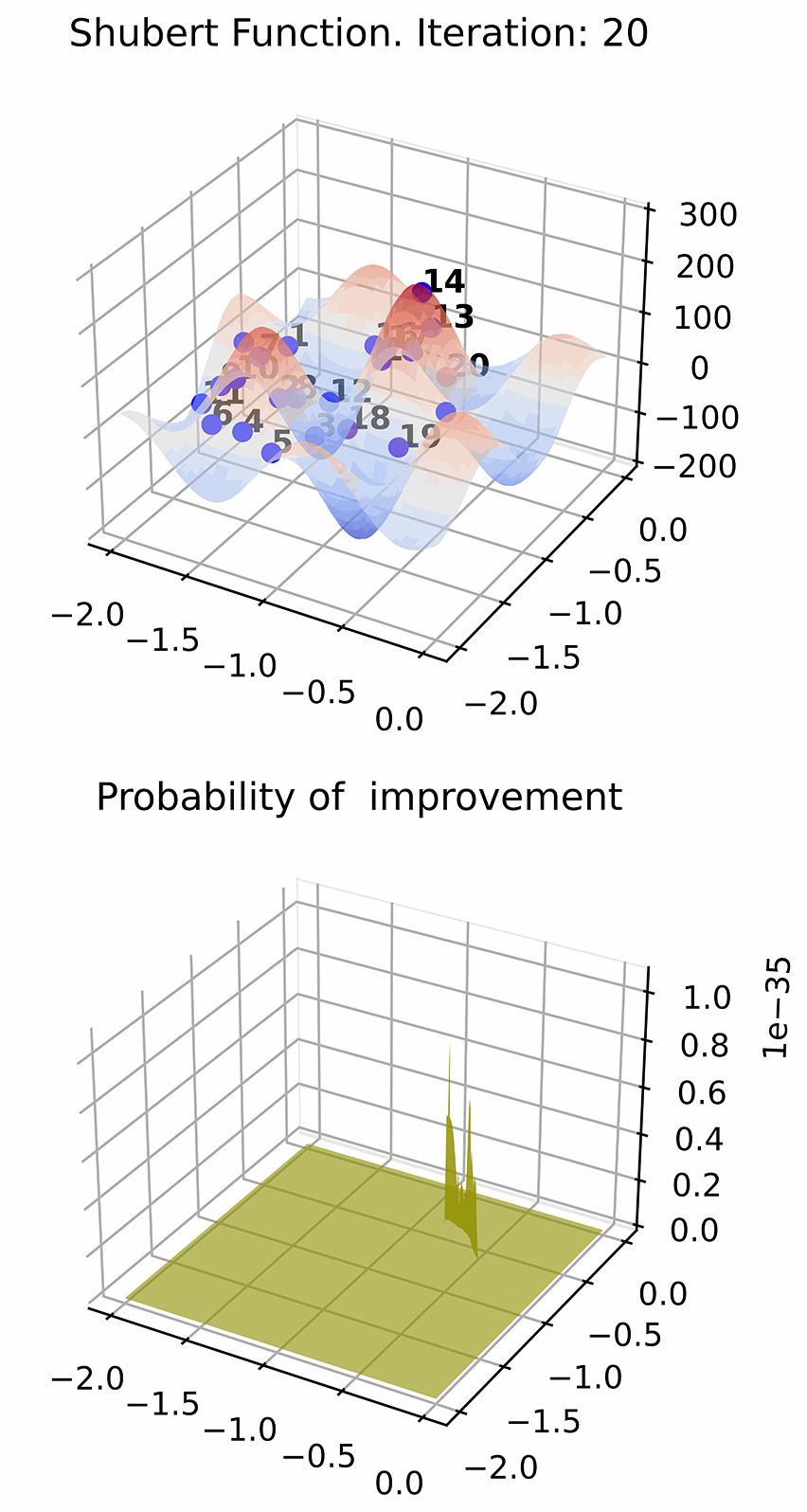}
		\caption{\small $ \xi=40 $}
		\label{fig:sen_t40}
	\end{subfigure}
	\begin{subfigure}[t]{0.15\textwidth}
		\centering
		\includegraphics[width=\textwidth]{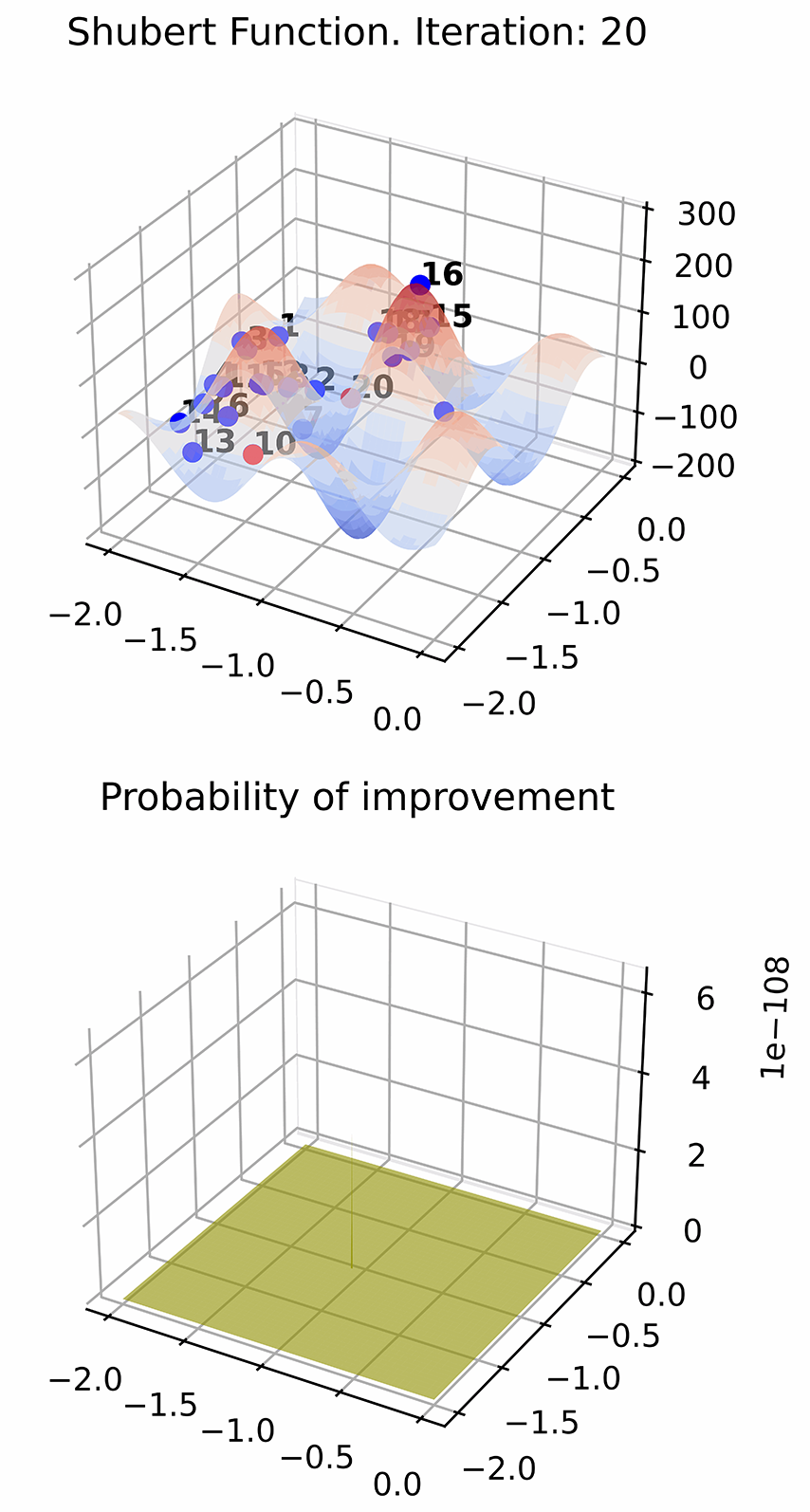}
		\caption{\small $ \xi=80 $}
		\label{fig:sen_t80}
	\end{subfigure}
	\caption{Different thresholds with Shubert function.}
	\label{fig:sen_t}
\end{figure}

{\em Threshold}: As discussed in Corollary~\ref{coro:pi} and \ref{coro:ei}, the threshold $ \xi $ defines the interval where we desire the local optima. After setting $ \xi $, the algorithm will prefer to find the optima higher than the threshold with lower confidence. We present the results of three different threshold settings after 20 iterations in Fig.~\ref{fig:sen_t}. When we increase the threshold, the acquisition function (AF) will exclusively suggest the next latent optima points with the lower value, given Fig.~\ref{fig:sen_t80}, while the one with lower threshold will consider almost all the sampling point with the same probability, as shown in Fig.~\ref{fig:sen_t0}. For functions with many different local optima, over-increasing the threshold may reduce the algorithm's capability of finding the optimum solutions accurately.

\begin{figure}[h]
	\centering
	\begin{subfigure}[t]{0.20\textwidth}
		\centering
		\includegraphics[width=\textwidth]{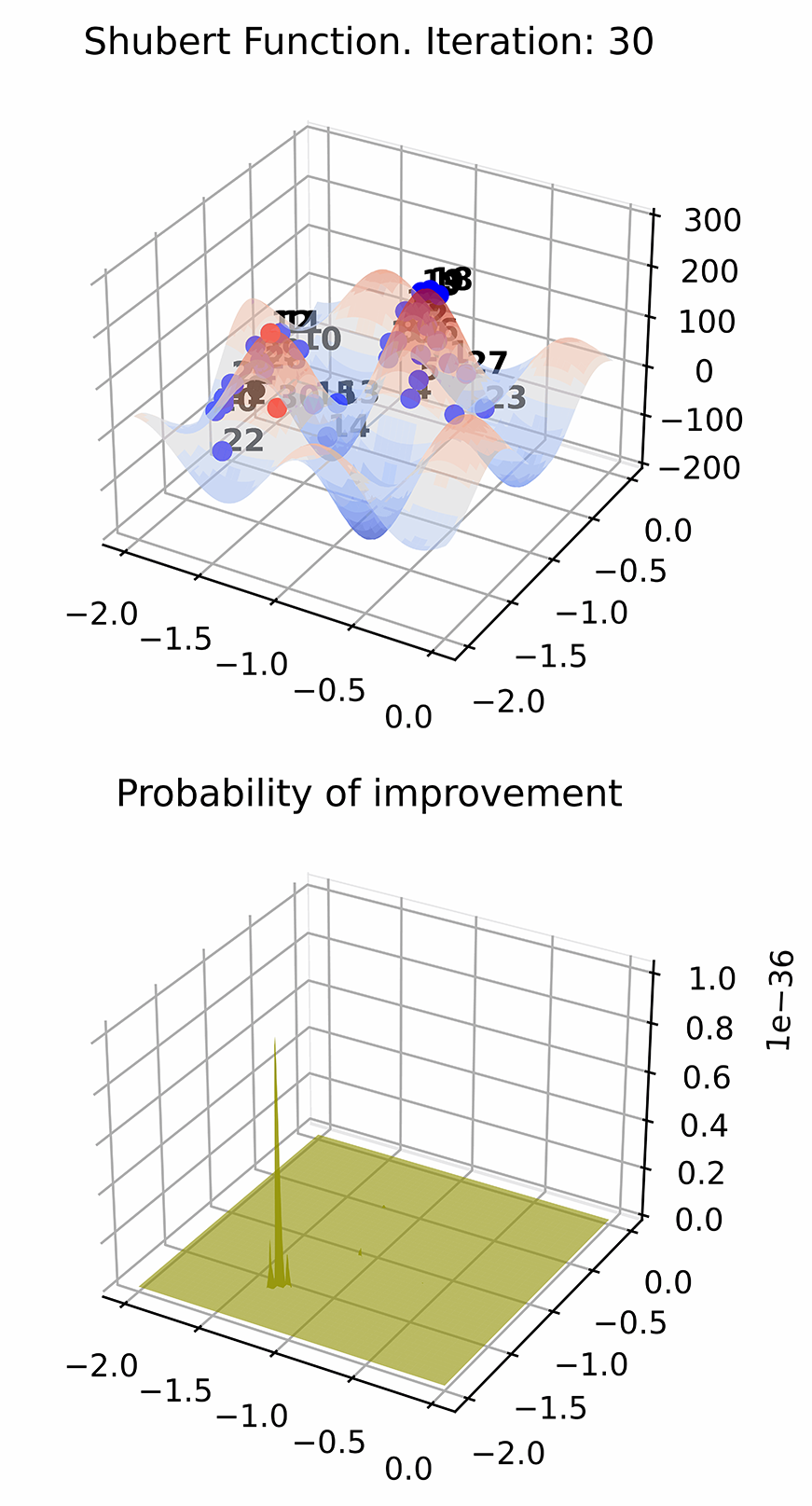}
		\caption{\small $ d=0.05 $}
		\label{fig:sen_d0.05}
	\end{subfigure}
	\begin{subfigure}[t]{0.196\textwidth}
		\centering
		\includegraphics[width=\textwidth]{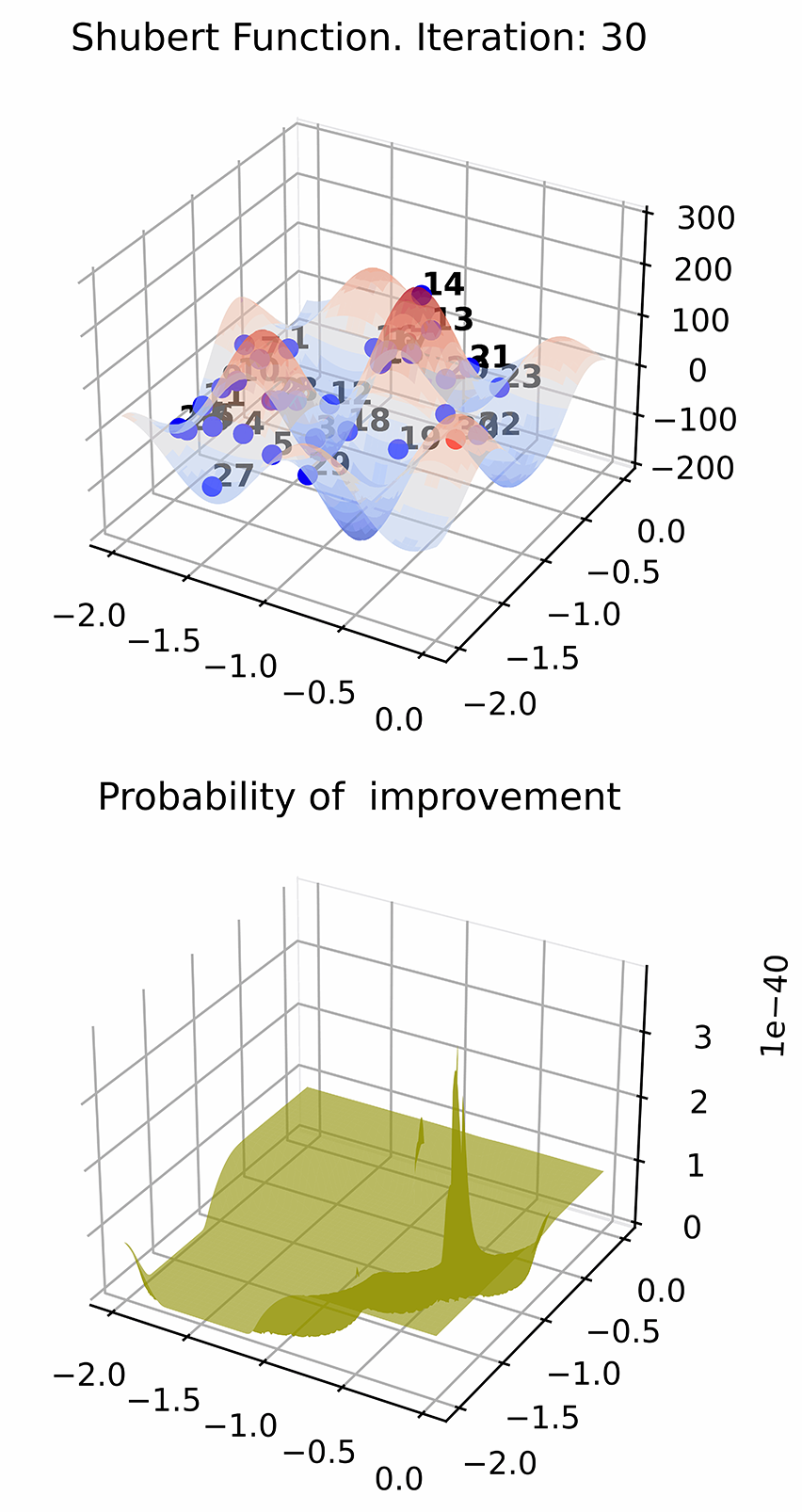}
		\caption{\small $ d=0.8 $}
		\label{fig:sen_d0.8}
	\end{subfigure}
	\caption{Different sampling distances with Shubert function.}
	\label{fig:sen_d}
\end{figure}

{\em Sampling Distance}: The sampling distance $ d $ defines the smallest Euclidean distance between two selected points. We introduce this factor to avoid over-searching within a particular area and improve the algorithm's generalization capability in finding local optima within a small number of iterations. For some objective functions with only a few optimum solutions, reducing $ d $ will help to locate an exact optimum point. However, if an objective function has multiple local optima, such as the Shubert function, we need to increase $ d $ to speed up searching for all possible optimum solutions in the given input domain, as shown in Fig.~\ref{fig:sen_d}. Additionally, as seen in the figure, the larger the sampling distance, the faster the acquisition value will decrease.

\end{document}